\newtheorem*{theoA}{Theorem A}
\newtheorem*{theoB}{Theorem B}
\newtheorem*{theoC}{Theorem C}
\newtheorem*{theoD}{Theorem D}
\newtheorem*{theoE}{Theorem E}
\newtheorem*{theoF}{Theorem F}
\newtheorem*{theoG}{Theorem G}
\newtheorem{theo}{Theorem}[section]
\newtheorem{lem}{Lemma}[section]
\newtheorem{cor}{Corollary}[section]
\newtheorem{exm}{Example}[section]
\newtheorem{defi}{Definition}[section]
\newtheorem{rem}{Remark}[section]
\newtheorem{ques}{Question}[section]
\newcommand{\ol}{\overline}
\newcommand{\be}{\begin{equation}}
\newcommand{\ee}{\end{equation}}
\newcommand{\beas}{\begin{eqnarray*}}
\newcommand{\eeas}{\end{eqnarray*}}
\newcommand{\bea}{\begin{eqnarray}}
\newcommand{\eea}{\end{eqnarray}}
\numberwithin{equation}{section}
\begin{document}
\title[On some sufficient conditions of the strong uniqueness polynomials]{On some sufficient conditions of the strong uniqueness polynomials}
\date{}
\author[A. Banerjee and B. Chakraborty ]{ Abhijit Banerjee  and Bikash Chakraborty }
\date{}
\address{ Department of Mathematics, University of Kalyani, West Bengal 741235, India.}
\email{abanerjee\_kal@yahoo.co.in, abanerjee\_kal@rediffmail.com
}
\email{bikashchakraborty.math@yahoo.com, bikashchakrabortyy@gmail.com}
\maketitle
\let\thefootnote\relax
\footnotetext{2010 Mathematics Subject Classification: 30D35.}
\footnotetext{Key words and phrases: Meromorphic function, Strong uniqueness polynomial, Uniqueness polynomial, Unique range set.}
\footnotetext{Type set by \AmS -\LaTeX}
\setcounter{footnote}{0}

\begin{abstract} In this paper we shall find some sufficient conditions for a uniqueness polynomial to be a strong uniqueness polynomial as this type of problem was never investigated by the researchers earlier. We also exhibit some examples to substantiate our theorems.
 \end{abstract}
\section{Introduction Definitions and Results}
In this paper we adopt the standard notations of the Nevanlinna theory of meromorphic functions as explained in (\cite{4.1}).\par
In the course of studying the factorization of meromorphic functions, Gross (\cite{4}) introduced the concept of a unique range set, which we define first.\par
A discrete set $S$ of $\mathbb{C}$ is called a Unique Range Set for meromorphic (entire) functions if there exists no pair of two distinct non-constant meromorphic (or entire) functions such that they have the same inverse images of $S$ counted with multiplicities.\par
Pertinent with the above definition during the last quarter century or so several authors presented many elegant results to enrich uniqueness theory. Actually a lot of efforts were being put on to find unique range sets which are different in nature as well as cardinalities (\cite{1.1},\cite{2}, \cite{2.1},\cite{5},\cite{6},\cite{8}).
\par The basic idea in studying unique range set is to construct a polynomial $P(z)$ with simple zeros whose zero set $S$ will be desired unique range set. In course of the development of this particular literature, viz-a-viz value distribution theory has judiciously been shifted towards finding that $\frac{P(f)}{P(g)}$ is a  non-zero constant.\par
This idea motivated researchers to define the definitions of so called Uniqueness Polynomial or Strong Uniqueness Polynomial.
\begin{defi}
 A polynomial $P$ in $\mathbb{C}$ is called a uniqueness polynomial for meromorphic (entire) functions, if for any two non-constant meromorphic (entire) functions $f$ and $g$, $P(f)\equiv P(g)$ implies $f\equiv g$.We say $P$ is a UPM (UPE) in brief.
\end{defi}
It is clear that any one degree polynomial is uniqueness polynomial but from (\cite{6}), no polynomial of degree $2$ or $3$ is a UPE. Thus to be a uniqueness polynomial for entire functions is of degree at least four. Now we demonstrate some uniqueness polynomials.
\begin{exm} (\cite{6}) Let $P(z)=z^{4}+a_{3}z^{3}+a_{2}z^{2}+a_{1}z+a_{0}$. Then $P$ is not a UPM. Also $P$ is a UPE if and only if $(\frac {a_{3}}{2})^{3}-\frac {a_{2}a_{3}}{2}+a_{1}\not=0$.\end{exm}
\begin{exm}\label{D}(\cite{6.1}) Let $P(z)=z^{n}+a_{n-1}z^{n-1}+...+a_{1}z+a_{0}~(n\geq4)$ be a monic polynomial. If there exist an integer $t$ with $1\leq t < n-2$ and $\gcd(n,t)=1$ such that $a_{n-1}=...=a_{t+1}=0$ but $a_{t} \neq 0$, then $P$ is a UPE.
\end{exm}
\begin{exm}\label{E}(\cite{6.1}) Let $P(z)=z^{n}+a_{m}z^{m}+a_{0}$ be a monic polynomial such that $\gcd(n,m)=1$ and  $a_{m} \neq 0$. If $n\geq 5$ and  $1\leq m < n-1$, then $P$ is a UPM.
\end{exm}
\begin{defi}
A polynomial $P$ in $\mathbb{C}$ is called a strong uniqueness polynomial
for meromorphic (entire) functions if for any non-constant meromorphic (entire) functions $f$ and $g$, $P(f)\equiv AP(g)$ implies $f\equiv g$, where $A$ is any nonzero constant. In this case we say $P$ is a SUPM (SUPE) in brief.
\end{defi}
It is clear from the above definitions that a SUPM(SUPE) is a UPM(UPE) but a UPM(UPE) may not be a SUPM(SUPE). However the following example shows that one degree polynomials are UPM(UPE) but may not be SUPM(SUPE).
\begin{exm}\label{ex1} Let $P(z)=az+b$ $(a\neq 0)$. Clearly $P(z)$ is a UPM(UPE) but for any non-constant meromorphic function(entire) $g$, if we take $f := cg-\frac{b}{a}(1-c)$  $(c \neq 0,1)$, then $P(f)=cP(g)$ but $f \neq g$.
\end{exm}
First we recall some existing strong uniqueness polynomials in the literature.\par
\begin{exm}(\cite{8})
The polynomial
$$P_{Y}(z)=z^{n}+az^{n-r}+b$$
is a uniqueness polynomial if $\gcd (n, r)=1$, $r\geq 2$, $ab\not=0$ and  $n\geq 6$.\par
Also from [p.79, Case 3, first part, (\cite{8})], it is clear that whenever $n\geq 2r+4$, $P_{Y}(f)\equiv cP_{Y}(g)$ where $c\not=0$ implies $P_{Y}(f)\equiv P_{Y}(g)$ and hence it is a strong uniqueness polynomial.
\end{exm}
Next we invoke the following polynomial introduced by Frank and Reinders in (\cite{2}).
\begin{exm}(\cite{2})
$$P_{FR}(z)=\frac{(n-1)(n-2)}{2}z^{n}-n(n-2)z^{n-1}+\frac{n(n-1)}{2}z^{n-2}-d~(d\not=0,1).$$
From (\cite{2}), we know $P_{FR}$ is a UPM if $n\geq 6$. Also from [p. 191, Case 2, (\cite{2})], it is clear that whenever $n\geq 8$, $P_{FR}(f)\equiv cP_{FR}(g)$ where $c\not=0$ implies $P_{FR}(f)\equiv P_{FR}(g)$, i.e., $P_{FR}$ is a strong uniqueness polynomial when $n\geq 8$.
\end{exm}
Very recently the first author (\cite{1}) introduced a new polynomial.
\begin{exm}(\cite{1})
$$P_{B}(z)=\sum\limits_{i=0}^{m} \binom{m}{i}\frac{(-1)^{i}}{n+m+1-i}z^{n+m+1-i} + c$$
where $c \neq 0, -\sum\limits_{i=0}^{m} \binom{m}{i}\frac{(-1)^{i}}{n+m+1-i}$.\par
The first author showed that $P_{B}$ is a uniqueness polynomial of degree 6 and strong uniqueness polynomial of degree 7 for $c=1$.
\end{exm}
As finding unique range set is the motivation of studying strong uniqueness polynomials, it is quite natural to assume that uniqueness polynomial has no multiple zero. In (\cite{2.1,2.2}), to find a necessary and sufficient condition for a monic polynomial without multiple zero to be a UPM, Fujimoto introduced a new definition which was recently been characterized in (\cite{1.2}) as critical injection property.\par
Let $P(z)$ be a monic polynomial without multiple zero whose derivatives has mutually distinct $k$ zeros given by $d_{1}, d_{2}, \ldots, d_{k}$ with multiplicities $q_{1}, q_{2}, \ldots, q_{k}$ respectively. Below we are demonstrating the definition of \textbf{critical injection property}.
\begin{defi}
A polynomial $P$ is said to satisfy critical injection property if $P(\alpha )\not =P(\beta )$ for any two distinct zeros $\alpha $, $\beta $ of the derivative $P'$.
\end{defi}
Clearly the meaning of critical injection property is that the polynomial $P$ is injective on the set of distinct zeros of $P^{'}$, which are known as critical points of $P$. Naturally a polynomial with this property may be called a \textbf{critically injective polynomial}. Thus a critically injective polynomial has
at-most one multiple zero.
\par The following theorem of Fujimoto  completely characterizes a monic polynomial with only simple zero to be a uniqueness polynomial.
 \par
\begin{theoA}(\cite{2.2}) Suppose that $P(z)$ is critically injective.  Then $P(z)$ will be a uniqueness polynomial if and only if $$\sum \limits_{1\leq l<m\leq k}q_{_{l}}q_{m}>\sum \limits_{l=1}^{k} q_{_{l}}.$$
In particular the above inequality is always satisfied whenever $k\geq 4$. When $k=3$ and $\max \{q_{1},q_{2},q_{3}\}\geq 2$ or when $k=2$, $\min \{q_{1},q_{2}\}\geq 2$ and $q_{1}+q_{2}\geq 5$, then also the above inequality holds.\end{theoA}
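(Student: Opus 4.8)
The plan is to establish both implications, with the factorization of $P$ at its critical points as the common starting point. Write $n=\deg P$; since $P$ is monic, $P'(z)=n\prod_{l=1}^{k}(z-d_{l})^{q_{l}}$, so $\sum_{l=1}^{k}q_{l}=n-1$, and, $d_{l}$ being a zero of $P'$ of order $q_{l}$, it is a zero of $P(z)-P(d_{l})$ of order $q_{l}+1$: thus $P(z)-P(d_{l})=(z-d_{l})^{q_{l}+1}R_{l}(z)$ with $R_{l}(d_{l})\neq 0$ and $\deg R_{l}=n-q_{l}-1$. Critical injectivity gives $P(d_{m})\neq P(d_{l})$ for $m\neq l$, hence $R_{l}(d_{m})\neq 0$, and comparing orders of vanishing (using $P'(z_{0})\neq 0$ for $z_{0}\notin\{d_{1},\dots,d_{k}\}$) shows that every root of $R_{l}$ is \emph{simple} and avoids $\{d_{1},\dots,d_{k}\}$. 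The ``in particular'' assertions are elementary: putting $Q=n-1=\sum_{l}q_{l}$ and using $2\sum_{l<m}q_{l}q_{m}=Q^{2}-\sum_{l}q_{l}^{2}$, the displayed inequality becomes $\sum_{l}q_{l}^{2}<Q(Q-2)$, and since for fixed $k$ and $Q$ with each $q_{l}\geq 1$ the left side is largest when the mass is concentrated in one index, a short computation confirms it when $k\geq 4$, when $k=3$ and some $q_{l}\geq 2$, and when $k=2$ with $\min\{q_{1},q_{2}\}\geq 2$ and $q_{1}+q_{2}\geq 5$ (for $k=2$ it reads $(q_{1}-1)(q_{2}-1)>1$).

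\emph{Sufficiency.} Assume the inequality holds and suppose, for contradiction, that $f,g$ are non-constant meromorphic with $P(f)\equiv P(g)$ but $f\not\equiv g$ (if one of $f,g$ were constant, so would be the other). Comparing characteristic functions gives $T(r,f)=T(r,g)+O(1)$, and comparing polar divisors (only the leading term of $P$ contributes at a pole) shows $f$ and $g$ have exactly the same poles with the same multiplicities. If $z_{0}$ is a zero of $f-d_{l}$ of multiplicity $\mu$, then $P(f)-P(d_{l})$, hence $P(g)-P(d_{l})=(g-d_{l})^{q_{l}+1}R_{l}(g)$, vanishes to order $(q_{l}+1)\mu\geq q_{l}+1$ at $z_{0}$; since $R_{l}(d_{l})\neq 0$ and the roots of $R_{l}$ are simple, this forces \emph{either} $g(z_{0})=d_{l}$ with the same multiplicity $\mu$, \emph{or} $g(z_{0})$ to be a root of $R_{l}$ with $z_{0}$ a ramification point of $g$ of index $(q_{l}+1)\mu\geq q_{l}+1$; symmetrically with $f,g$ interchanged. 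I would then apply the Second Main Theorem to $f$ and to $g$ with the $k+1$ targets $d_{1},\dots,d_{k},\infty$, use this dichotomy to split each $\overline{N}(r,\tfrac{1}{f-d_{l}})$ into a ``shared'' part (common to $f$ and $g$) and an ``exceptional'' part supported on the highly ramified points of $g$ above the roots of $R_{l}$, bound the exceptional part --- weighted by $q_{l}$ --- by the ramification counting function of $g$ that the Second Main Theorem controls, and add the two resulting inequalities. The multiplicities $q_{l}$ enter through the lower bound $(q_{l}+1)\mu$, and after cancellation the estimate should collapse to $\sum_{l}q_{l}\geq\sum_{l<m}q_{l}q_{m}$, contradicting the hypothesis. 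Making this cancellation precise --- showing that the ``extra'' vanishing attached to the values $d_{1},\dots,d_{k}$ aggregates to exactly the symmetric sum $\sum_{l<m}q_{l}q_{m}$ --- is the heart of the argument and the step I expect to be most delicate.

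\emph{Necessity.} Assume $\sum_{l<m}q_{l}q_{m}\leq\sum_{l}q_{l}$; I will exhibit non-constant meromorphic $f\not\equiv g$ with $P(f)\equiv P(g)$, so that $P$ is not a uniqueness polynomial. Since $P(z)-P(w)$ vanishes on the diagonal, write $P(z)-P(w)=(z-w)\Phi(z,w)$ with $\deg_{z}\Phi=\deg_{w}\Phi=n-1$, and let $\overline{C_{0}}\subseteq\mathbb{P}^{1}\times\mathbb{P}^{1}$ be the projective closure of $\{\Phi=0\}$; a pair $(f,g)$ as required is precisely a non-constant holomorphic map $\mathbb{C}\to\overline{C_{0}}$, and since $P$ is non-constant, $\overline{C_{0}}$ contains no vertical or horizontal line, so the two coordinates are automatically non-constant and $f\not\equiv g$ (no component of $\overline{C_{0}}$ lies on the diagonal). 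Critical injectivity makes $\{P(z)=P(w)\}$ smooth off the diagonal --- an off-diagonal singularity would be a point $(d_{l},d_{m})$, $l\neq m$, with $P(d_{l})=P(d_{m})$, which is excluded --- and a local analysis shows that $\overline{C_{0}}$ has $q_{l}$ analytic branches at each $(d_{l},d_{l})$ and that the projection $\overline{C_{0}}\to\mathbb{P}^{1}_{z}$, of degree $n-1$, is ramified only over the roots of the polynomials $R_{m}$, with index $q_{m}+1$ at each of the $n-q_{m}-1$ such points for a given $m$ (in particular it is unramified over $z=d_{l}$ and over $z=\infty$). Riemann--Hurwitz, summed over the components $Y_{i}$ of the normalization of $\overline{C_{0}}$, then gives
$$\sum_{i}\bigl(2g_{Y_{i}}-2\bigr)=-2(n-1)+\sum_{m}q_{m}\bigl(n-1-q_{m}\bigr)=2\Bigl(\sum_{l<m}q_{l}q_{m}-\sum_{l}q_{l}\Bigr)\leq 0,$$
so some component $Y_{i}$ has genus $0$ or $1$; uniformizing its normalization ($\mathbb{P}^{1}$, respectively $\mathbb{C}/\Lambda$) and composing with the inclusion into $\mathbb{P}^{1}\times\mathbb{P}^{1}$ produces the desired pair $(f,g)$.

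The principal obstacle is the sufficiency direction: pinning down the exact inequality $\sum_{l}q_{l}\geq\sum_{l<m}q_{l}q_{m}$ out of the Second Main Theorem requires careful bookkeeping of the ramification produced by the factors $R_{l}$. This is also why a purely genus-theoretic proof of sufficiency seems unavailable --- the Riemann--Hurwitz identity above would not, by itself, exclude a reducible $\overline{C_{0}}$ with components of mixed genus --- so the Nevanlinna-theoretic route appears to be the one that genuinely produces the symmetric-function condition.
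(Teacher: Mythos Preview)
The paper does not prove Theorem~A at all: it is quoted verbatim from Fujimoto \cite{2.2} and used only as a tool (see the line ``The following theorem of Fujimoto completely characterizes\dots'' and the citation attached to the statement). There is therefore no proof in this paper against which your attempt can be compared.

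For what it is worth, your outline tracks Fujimoto's original argument fairly closely: the necessity direction is indeed handled by a Riemann--Hurwitz/genus computation on the off-diagonal curve $\{\Phi(z,w)=0\}$, and the sufficiency direction is obtained from the Second Main Theorem applied at the critical points $d_{1},\dots,d_{k}$ together with the ramification forced by the factorizations $P(z)-P(d_{l})=(z-d_{l})^{q_{l}+1}R_{l}(z)$. Your identification of the delicate step --- extracting precisely $\sum_{l<m}q_{l}q_{m}$ from the ramification bookkeeping --- is accurate; in Fujimoto's paper this is done by a careful truncated-counting-function estimate rather than the somewhat informal ``shared/exceptional'' split you describe, and that is where a fully written-out proof would require the most work. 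The elementary verification of the ``in particular'' clauses and the Riemann--Hurwitz identity you wrote down are both correct.
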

\begin{exm}\label{9}(\cite{2.1})
For $k=1$, taking $P(z)=(z-a)^{q}-b$ for some constants $a$ and $b$ with $b\not=0$ and an integer $q\geq 2$, it is easy to verify that for an arbitrary non-constant meromorphic function $g$ and a constant $c(\not=1)$ with $c^{q}=1$, the function $g:=cf+(1-c)a(\not=f)$ satisfies the condition $P(f)=P(g)$.
\end{exm}
\par Fujimoto also showed that the critical injection property of polynomial helps one to find a sufficient condition for a strong uniqueness polynomial. In this connection, Fujimoto proved the following theorem.
\begin{theoB}\label{h1}(\cite{2.1}) For a critically injective polynomial $P(z)$ with $k\geq4$, if $$P(d_{1})+P(d_{2})+\ldots+P(d_{k}) \neq 0,$$ then $P$ is a strong uniqueness polynomial.
\end{theoB}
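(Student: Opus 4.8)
The plan is to show that the hypothesis $P(f)\equiv AP(g)$ already forces $A=1$; once that is established, Theorem A (which applies since $k\ge4$) gives $f\equiv g$, so $P$ is a SUPM. So let $f,g$ be non-constant meromorphic functions with $P(f)\equiv AP(g)$ for a nonzero constant $A$, and assume for contradiction that $A\neq1$. I begin with three easy reductions. Comparing the pole divisors of the two sides shows that $f$ and $g$ have exactly the same poles with the same multiplicities, whence $T(r,f)=T(r,g)+O(1)=:T(r)$ and $\ol N(r,f)=\ol N(r,g)$. Next, since $P'(d_i)=0$ while $P$ has no multiple zero, each critical value $c_i:=P(d_i)$ is nonzero, and by critical injectivity $c_1,\dots,c_k$ are distinct. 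Finally $f\not\equiv g$ (otherwise $(A-1)P(g)\equiv0$), and $f-g$ cannot be a constant $v$ either, for then $P(z+v)\equiv AP(z)$ as polynomials, which by comparing leading coefficients would give $A=1$.

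The next step extracts the role of the hypothesis $\sum_iP(d_i)\neq0$: if $Ac_i\in\{c_1,\dots,c_k\}$ for \emph{every} $i$, then $z\mapsto Az$ is a bijection of the finite set $\{c_1,\dots,c_k\}$, so $\sum_ic_i=\sum_iAc_i=A\sum_ic_i$, and since $A\neq1$ this forces $\sum_ic_i=0$, a contradiction. Hence $z\mapsto Az$ does \emph{not} carry $\{c_1,\dots,c_k\}$ into itself; in particular $A\{c_1,\dots,c_k\}\cap\{c_1,\dots,c_k\}$ has at most $k-1$ elements.

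The heart of the argument is a second-main-theorem estimate. The key local fact: if $Ac_i$ is not a critical value of $P$, then all roots of $P(z)=Ac_i$ are simple, so at each zero of $g-d_i$ (of multiplicity $\mu$) the function $P(f)-Ac_i=A\bigl(P(g)-c_i\bigr)$ vanishes to order $\mu(q_i+1)\ge 2$, forcing $f$ to ramify there, over a value outside $\{d_1,\dots,d_k\}$, with ramification excess at least $q_i$. Feeding all such forced ramifications into the second main theorem, in the form retaining the ramification term, applied to $f$ with targets $d_1,\dots,d_k,\infty$, then doing the symmetric thing for $g$, and adding the two inequalities (so that the ramification terms cancel), I expect to reach
\be
(k-1)T(r)\ \le\ \ol N(r,f)+N^{\ast}(r)+S(r),
\ee
where $N^{\ast}(r)$ is the reduced counting function of the ``coincidence'' points at which $f$ and $g$ \emph{simultaneously} take values in $\{d_1,\dots,d_k\}$ --- the only points where the ramification mechanism is vacuous.

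It then remains to bound $N^{\ast}(r)$. At a coincidence point $f=d_i$, $g=d_j$ with $Ac_j=c_i$, so $c_i\in A\{c_1,\dots,c_k\}\cap\{c_1,\dots,c_k\}$, and, since $A\neq1$ and $c_i\neq0$, necessarily $d_i\neq d_j$. Using this --- that at coincidence points $f$ is confined to at most $k-1$ of the critical points, that $f-g$ is non-constant and takes there one of finitely many \emph{nonzero} values $d_i-d_j$, and a final application of the second main theorem to $f-g$ balanced against the combinatorics above --- one obtains $N^{\ast}(r)\le\ol N(r,f)+S(r)$; together with $\ol N(r,f)\le T(r)+O(1)$ and the previous display this yields $(k-1)T(r)\le 2T(r)+S(r)$, impossible for $k\ge4$. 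Hence $A=1$, and the proof is complete. The step I expect to be the real obstacle is this last one: squeezing $N^{\ast}(r)$ down to $\ol N(r,f)+S(r)$, which is precisely where the hypothesis $\sum_iP(d_i)\neq0$ and the counting of the coincidences of $f$ and $g$ over the critical set have to be played off against each other, and it is the reason the threshold is $k\ge4$ rather than $k\ge3$.
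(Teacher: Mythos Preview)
The paper does not contain a proof of Theorem~B at all: it is quoted verbatim as a result of Fujimoto with the citation \cite{2.1} and no argument is given. There is therefore nothing in this paper against which to compare your proposal; the only proofs supplied here are those of Theorems~\ref{th1}, \ref{th41} and \ref{th2}.

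As for the proposal itself, what you have written is an outline rather than a proof, and you say so yourself (``I expect to reach\ldots'', ``The step I expect to be the real obstacle is this last one''). The opening reduction is fine, and the pigeonhole step --- that $A\ne1$ together with $\sum_i P(d_i)\ne0$ forces $A\{c_1,\dots,c_k\}\not\subset\{c_1,\dots,c_k\}$ --- is indeed the place where the hypothesis enters and matches Fujimoto's mechanism. But the two Nevanlinna inequalities you need are not established: you do not actually derive the display $(k-1)T(r)\le \ol N(r,f)+N^{\ast}(r)+S(r)$ from the second main theorem (the bookkeeping of which ramification terms cancel when you add the $f$- and $g$-inequalities is the whole point and is omitted), and the final bound $N^{\ast}(r)\le \ol N(r,f)+S(r)$ is asserted with only a vague gesture toward applying the second main theorem to $f-g$. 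Since $T(r,f-g)$ can be as large as $2T(r)$, an application of the second main theorem to $f-g$ with the finitely many targets $d_i-d_j$ does not obviously yield a bound of size $T(r)$ on $N^{\ast}$; you would need to explain precisely which targets you use and why the resulting inequality closes. Until those two estimates are written out, the argument has a genuine gap.
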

\begin{rem}
As a application of Theorem B, Fujimoto himself proved that $P_{Y}(z)$ is a strong uniqueness polynomial for $n>r+1$ when $r\geq3$ and $\gcd (n, r)=1$ [see p. 1192, example 4.10., (\cite{2.1})] which is an improvement of a result of Yi in (\cite{8}).\par
\end{rem}
\begin{theoC}(\cite{2.2}) For a critically injective polynomial $P(z)$ with $k=3$, if $\max (q_{1},q_{2},q_{3})\geq 2$ and
$$\frac{P(d_{l})}{P(d_{m})}\neq \pm 1,~for ~1\leq l<m \leq 3,$$
$$\frac{P(d_{l})}{P(d_{m})}\neq \frac{P(d_{m})}{P(d_{n})}~for~any~permutation~(l,m,n)~of~(1,2.3),$$ then $P$ is a strong uniqueness polynomial.
\end{theoC}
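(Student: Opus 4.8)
The plan is to reduce the problem to showing that the nonzero constant $A$ equals $1$. Since $k=3$ and $\max\{q_1,q_2,q_3\}\ge 2$, Theorem A guarantees that $P$ is a uniqueness polynomial, so once $A=1$ is forced, $P(f)\equiv P(g)$ yields $f\equiv g$. Accordingly I assume $f,g$ are non-constant meromorphic with $P(f)\equiv AP(g)$ and $A\ne 1$, and aim at a contradiction. Two facts are used throughout: $n:=\deg P=1+q_1+q_2+q_3\ge 5$, and $nT(r,f)=T(r,P(f))+O(1)=T(r,AP(g))+O(1)=nT(r,g)+O(1)$, so $T(r,f)$ and $T(r,g)$ agree up to $O(1)$; I write $T(r)$ for either.

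The backbone is a combinatorial map on critical values. Factor $P(z)-P(d_i)=(z-d_i)^{q_i+1}R_i(z)$: then $\deg R_i=n-q_i-1$, $R_i(d_i)\ne 0$ (as $P-P(d_i)$ vanishes to order exactly $q_i+1$ at $d_i$), and critical injectivity makes every root of $R_i$ simple and non-critical; likewise, for any $b\notin\{P(d_1),P(d_2),P(d_3)\}$ the fibre $P^{-1}(b)$ consists of $n$ simple points, none critical, the fibres over distinct such $b$ being disjoint. Define the partial self-map $\tau$ of $\{1,2,3\}$ by $\tau(j)=i$ whenever $AP(d_j)=P(d_i)$; since $P(d_1),P(d_2),P(d_3)$ are distinct, $\tau$ is a well-defined partial injection, and since $A\ne 1$ it has no fixed point. (I suppress the mild degenerate situation in which some $d_i$ is a zero of $P$ --- then it is the unique multiple zero of $P$ --- which needs slightly different bookkeeping.) Set $d_0:=|\operatorname{dom}\tau|$.

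The argument splits on $d_0$. If $d_0\in\{2,3\}$, a short inspection of the fixed-point-free partial injections of a $3$-element set shows that $\tau$ contains either a $2$-cycle, giving $A^2=1$ and hence $P(d_a)/P(d_b)=-1$ for two indices, against $P(d_l)/P(d_m)\ne\pm 1$; or a chain $AP(d_a)=P(d_b)$, $AP(d_b)=P(d_c)$ with $a,b,c$ distinct, giving $P(d_a)/P(d_b)=P(d_b)/P(d_c)$, against the second hypothesis. If $d_0\in\{0,1\}$ I use value distribution. For each index $j$ with $AP(d_j)$ not a critical value, write $P^{-1}(AP(d_j))=\{e_{j,1},\dots,e_{j,n}\}$; from $P(f)-AP(d_j)=A\bigl(P(g)-P(d_j)\bigr)=A(g-d_j)^{q_j+1}R_j(g)$ one reads off the identity $\sum_{\nu}\overline N\bigl(r,\tfrac{1}{f-e_{j,\nu}}\bigr)=\overline N\bigl(r,\tfrac{1}{g-d_j}\bigr)+\overline N\bigl(r,\tfrac{1}{R_j(g)}\bigr)\le(n-q_j)T(r)+O(1)$. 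When $d_0=0$, feeding all $3n$ values $e_{j,\nu}$ ($j=1,2,3$) together with $\infty$ into the Second Main Theorem for $f$ gives $(3n-1)T(r)\le(2n+2)T(r)+S(r,f)$, i.e.\ $n\le 3$, a contradiction. When $d_0=1$, say $\operatorname{dom}\tau=\{j_1\}$: if $\tau(j_1)=j_1$ then $A=1$, a contradiction; otherwise run this estimate once for $f$ (against the fibres over $AP(d_i)$, $i\ne j_1$) and once, with the roles of $f$ and $g$ and of $A$ and $A^{-1}$ exchanged, for $g$ (against the fibres over $A^{-1}P(d_i)$, $i\ne\tau(j_1)$); each run gives $q_i+q_{i'}\le 2$ for one pair of indices, and because $j_1\ne\tau(j_1)$ the two pairs exhaust $\{1,2,3\}$, forcing $q_1=q_2=q_3=1$ and contradicting $\max\{q_1,q_2,q_3\}\ge 2$.

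The step I expect to be the real obstacle is the $d_0\in\{0,1\}$ analysis. The subtlety is that the ramification one may use lives in $P(f)-P(d_i)$ --- a zero of $f-d_i$ of order $m$ is a zero of $P(f)-P(d_i)$ of order $m(q_i+1)$ --- and not in $f-d_i$ itself, so the Second Main Theorem must be fed the pulled-back fibres $P^{-1}(AP(d_j))$ rather than $d_1,d_2,d_3$ directly; and for $d_0=1$ a single application is not enough, one genuinely needs the estimate for $f$ together with its mirror for $g$ (and the hypothesis $\max\{q_i\}\ge 2$) to close. The cases $d_0\in\{2,3\}$, where the two ratio hypotheses get consumed, are routine once $\tau$ is set up, and the degenerate case (a critical point equal to a zero of $P$) needs only a minor variant of the same arguments.
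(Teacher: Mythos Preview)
Theorem~C is not proved in this paper; it is quoted from Fujimoto~\cite{2.2} as background (alongside Theorems~A, B and~D), and the paper supplies no argument for it. There is therefore no ``paper's own proof'' to compare your attempt against.

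For what it is worth, your outline is a correct strategy and is in the spirit of Fujimoto's original argument: reduce to $A=1$ via Theorem~A, set up the partial injection $\tau$ on $\{1,2,3\}$ recording when $AP(d_j)$ hits a critical value, and split on $|\operatorname{dom}\tau|$. The cases $|\operatorname{dom}\tau|\ge 2$ consume precisely the two ratio hypotheses (a $2$-cycle forces $P(d_a)/P(d_b)=-1$, a chain forces $P(d_a)/P(d_b)=P(d_b)/P(d_c)$), while for $|\operatorname{dom}\tau|\le 1$ the Second Main Theorem, applied to the $n$ simple preimages in each fibre $P^{-1}(AP(d_j))$ with $j\notin\operatorname{dom}\tau$, gives the needed inequalities; your count for $d_0=0$ yielding $n\le 3$ and the paired $f$-and-$g$ estimates for $d_0=1$ forcing $q_1=q_2=q_3=1$ are both right. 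The one point you explicitly defer---the degenerate case where some $P(d_i)=0$, i.e.\ $P$ has a multiple zero at a critical point---is indeed only a bookkeeping variant, and in any case the ratio hypotheses of Theorem~C already presuppose $P(d_i)\ne 0$ for all $i$.
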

\begin{theoD}(\cite{2.2}) For a critically injective polynomial $P(z)$ with $k=2$ and $q_{1}\leq q_{2}$, if
\begin{enumerate}
\item $q_{1}\geq 3$ and $P(d_{1})+P(d_{2})\not=0$ or
\item $q_{1}\geq 2$ and $q_{2}\geq q_{1}+3$,
\end{enumerate}
then $P$ is a strong uniqueness polynomial.
\end{theoD}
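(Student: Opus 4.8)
The plan is to deduce Theorem~D from Theorem~A together with a value-distribution analysis (second main theorem with ramification) of the relation $P(f)\equiv AP(g)$.

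First, the reduction. Since $\deg P'=q_1+q_2$ and $P'$ has only the zeros $d_1,d_2$, we are in the case $k=2$ with $n:=\deg P=q_1+q_2+1$. Under either hypothesis $P$ is already a uniqueness polynomial: Theorem~A for $k=2$ requires $q_1q_2>q_1+q_2$, i.e.\ $(q_1-1)(q_2-1)\ge 2$, and in case~(1) $q_1\ge 3$ forces $q_2\ge 3$, while in case~(2) $q_1\ge 2$ and $q_2\ge q_1+3\ge 5$; in both cases $(q_1-1)(q_2-1)\ge 4$. Hence it suffices to prove that $P(f)\equiv AP(g)$ with $f,g$ non-constant meromorphic forces $A=1$; then $P(f)\equiv P(g)$ and the UPM property give $f\equiv g$. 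So assume $A\ne 1$, aiming at a contradiction.

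Set-up and the generic case. As $P'$ vanishes to order $q_i$ at $d_i$, write $P(z)-P(d_i)=(z-d_i)^{q_i+1}Q_i(z)$ with $\deg Q_i=q_{3-i}$ and $Q_i(d_i)\ne 0$; since a multiple root of $P(z)-P(d_i)$ other than $d_i$ is a zero of $P'$, and $P(d_1)\ne P(d_2)$, each $Q_i$ has $q_{3-i}$ distinct simple roots. Comparing pole orders in $P(f)\equiv AP(g)$ shows $f$ and $g$ have the same poles with equal multiplicities, $T(r,f)=T(r,g)+O(1)$, and, with $F:=P(f)=AP(g)$, that $F-P(d_i)=(f-d_i)^{q_i+1}Q_i(f)$ and $F-AP(d_i)=A(g-d_i)^{q_i+1}Q_i(g)$; thus $F$ is ramified of order $\ge q_i+1$ over $P(d_i)$ (along $\{f=d_i\}$) and over $AP(d_i)$ (along $\{g=d_i\}$), while every pole of $F$ has multiplicity $\ge n$. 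Apply the ramification defect relation $\sum_a(\delta(a,F)+\theta(a,F))\le 2$ at the target set $\{\infty,P(d_1),P(d_2),AP(d_1),AP(d_2)\}$: using $T(r,F)=nT(r,f)+O(1)$, $\overline N(r,1/(F-P(d_i)))\le(1+q_{3-i})T(r,f)+O(1)$, and the symmetric bounds through $g$, one finds that the left side is $\ge 5-\frac{2n+3}{n}=3-\frac3n>2$ whenever the five targets are pairwise distinct (recall $n\ge 4$) --- a contradiction.

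The degenerate configurations. The substance of the theorem is thus the cases where targets coincide; since $A\ne 1$, this occurs only if $(a)$ $P(d_i)=0$ for one $i$, or $(b)$ $A\in\{P(d_1)/P(d_2),\,P(d_2)/P(d_1)\}$ with $P(d_1)P(d_2)\ne 0$, the extreme sub-case $(c)$ of $(b)$ being $A=-1$ with $P(d_1)+P(d_2)=0$. In each case I would rerun the estimate with the surviving targets, now using that a target can be ramified \emph{simultaneously from the $f$- and the $g$-side}, which improves the bound on the corresponding $\overline N$. In $(c)$ only the three targets $\{\infty,P(d_1),P(d_2)\}$ survive, but each of $P(d_1),P(d_2)$ is then ramified of order $q_1+1$ from one side and $q_2+1$ from the other; the resulting estimate is $\sum_a(\delta(a,F)+\theta(a,F))\ge 2+\frac{q_2-q_1-2}{n}$, which contradicts the defect relation exactly when $q_2\ge q_1+3$ --- precisely hypothesis~(2); under hypothesis~(1) the assumption $P(d_1)+P(d_2)\ne 0$ excludes $(c)$ at the outset. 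Configuration $(a)$ with the \emph{smaller} critical point carrying the value $0$, and all of $(b)$ with $A\ne-1$, already close under $q_2\ge 3$, hence under both hypotheses. The last configuration --- $(a)$ with the \emph{larger} critical point at $0$ and $q_1$ small --- requires the sharper input obtained by subtracting the two representations of $F-P(d_1)$ and $F-AP(d_1)$, namely $(f-d_1)^{q_1+1}Q_1(f)-A(g-d_1)^{q_1+1}Q_1(g)=(A-1)P(d_1)$, which forces $f$ and $g$ into a rigid relation; here one also uses that, for $k=2$, $P$ is determined up to affine equivalence (indeed $Q_i$ is then uniquely fixed by $d_1,d_2,q_1,q_2$), so no symmetric polynomial --- which would genuinely fail to be a strong uniqueness polynomial --- can intervene. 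Assembling the cases, $A\ne 1$ is impossible.

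Main obstacle. The crux is exactly this degenerate bookkeeping: checking, configuration by configuration and at the extremal multiplicities ($q_1=3$ in case~(1); $q_1=2,\ q_2=5$ in case~(2)), that the ramification/defect inequality is strict, tracking which targets survive, with which ramification orders, and on which of $f,g$ they are forced --- while using the $k=2$ rigidity to rule out the symmetric polynomials for which the statement would otherwise fail. The reduction via Theorem~A and the generic estimate are routine; the two alternative hypotheses are needed in full, and only, to close these boundary cases.
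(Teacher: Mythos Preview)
The paper does \emph{not} prove Theorem~D: it is quoted verbatim from Fujimoto \cite{2.2} as background, so there is no ``paper's own proof'' to compare against. The paper's own results (Theorems~\ref{th1}--\ref{th2}) are different statements with their own, more elementary, Second Main Theorem arguments.

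As for your argument itself, the reduction via Theorem~A and the generic five-target defect estimate are fine, and your treatment of configurations $(b)$ and $(c)$ is correct (your computed threshold $q_2\ge q_1+3$ in $(c)$ and $q_2\ge 3$ in $(b)$ with $A\ne -1$ match what the defect relation actually gives). The weak spot is configuration $(a)$: your handling there (``subtracting the two representations \ldots forces $f$ and $g$ into a rigid relation'', invoking ``$k=2$ rigidity'') is not an argument, just a hope. Fortunately this case does not arise: the standing hypothesis throughout this block of the paper is that $P$ is a monic polynomial \emph{without multiple zeros}, and since each $d_i$ is a zero of $P'$, $P(d_i)=0$ would make $d_i$ a multiple root of $P$. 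Hence $P(d_1)P(d_2)\ne 0$ automatically, configuration $(a)$ is vacuous, and you should simply delete that paragraph rather than try to salvage it. With that correction your outline is a valid proof.
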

We noticed from the definitions and Example \ref{ex1} that uniqueness polynomials of degree one need not be strong uniqueness polynomials.
The following example shows that for higher degree polynomial also the same conclusion can be derived.
\begin{exm}\label{kly} Consider $P(z)=z^{n-r}(z^{r}+a)$ where $a$ is a non-zero complex number and $\gcd(n,r)=1$, $r\geq 2$ and $n\geq 5$.\par
Then $P$ is a uniqueness polynomial as shown in Example 1.3 but for any non-constant meromorphic function $g$ if we take $f=\omega g$  where $\omega$ is non-real $r-th$ root of unity. Then $P(f)=\omega^{n-r}P(g)$. Thus uniqueness polynomial may not be strong uniqueness polynomial.
\end{exm}
Though Fujimoto performed some remarkable investigations to find some sufficient conditions for a critically injective polynomial to be a strong uniqueness polynomial but so far no attempt have been made by any researchers to find some sufficient conditions for a UPM to be SUPM.
To deal in this new perspective is the main motivation of this paper.\\
\section{Main Results}
We have already seen from the Example \ref{9} that a polynomial having only one critical points can't be a uniqueness polynomial. So uniqueness polynomials has at least two critical points. Now we state our results.
\begin{theo}\label{th1} Suppose $P$ is a critically injective uniqueness polynomial of degree $n$ with simple zeros. Assume that $P$ has at least two critical points and among them let $\alpha$ and $\beta$ be the two critical points with maximum multiplicities.
Also assume that $z=\alpha$ is a $P(\alpha)$ point of $P(z)$ of order $p$ and $z=\beta$ is a $P(\beta)$ point of $P(z)$ of order $t$.
If $\max\{t,p\}+t+p\geq 5+n$ and $\{P(\alpha)+P(\beta)\} \not=0$, then $P(z)$ is a strong uniqueness polynomial.
\end{theo}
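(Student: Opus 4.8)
The plan is to follow the strategy by which Fujimoto established Theorems B--D: reduce everything to showing that the multiplicative constant must be $1$. So suppose $f$ and $g$ are non-constant meromorphic functions with $P(f)\equiv AP(g)$ for some nonzero constant $A$; since $P$ is a uniqueness polynomial, it suffices to prove $A=1$. I would assume $A\neq 1$ and work towards a contradiction with either $\max\{t,p\}+t+p\geq n+5$ or $P(\alpha)+P(\beta)\neq 0$, and there is no loss in taking $p\geq t$. First I would record the facts used repeatedly: because $P$ has only simple zeros no critical point of $P$ is a zero of $P$, so $P(\alpha)\neq 0$ and $P(\beta)\neq 0$; writing $a$ for the leading coefficient of $P$, critical injectivity gives factorisations $P(z)-P(\alpha)=a(z-\alpha)^{p}Q_{1}(z)$ and $P(z)-P(\beta)=a(z-\beta)^{t}Q_{2}(z)$ in which $Q_{1},Q_{2}$ have only simple zeros and the sets $\{\alpha\},\{\beta\},Z(Q_{1}),Z(Q_{2})$ are pairwise disjoint; and finally, comparing the pole divisors of $P(f)$ and $AP(g)$ (each $n$ times a common divisor) shows $f$ and $g$ have exactly the same poles with the same multiplicities, so that $T(r,f)=T(r,g)+O(1)=:T(r)+O(1)$ and $\ol N(r,f)=\ol N(r,g)$.

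The central object would be the auxiliary function
$$h:=\frac{P(f)-P(\alpha)}{P(f)-P(\beta)}=\frac{(f-\alpha)^{p}Q_{1}(f)}{(f-\beta)^{t}Q_{2}(f)}=\frac{P(g)-P(\alpha)/A}{P(g)-P(\beta)/A},$$
which, since $P(\alpha)\neq P(\beta)$, is a rational function of $f$ of degree $n$ and a rational function of $g$ of degree $n$; hence $T(r,h)=nT(r)+O(1)$, and $h\not\equiv 1$. From this I would extract two estimates. A ramification estimate: at a zero of $f-\alpha$ of order $m$ the function $h$ vanishes to order $mp$, so $P(g)-P(\alpha)/A$ vanishes there to order $mp$; provided $P(\alpha)/A$ is not a critical value of $P$ this forces $g$ to be ramified to order at least $p$ at that point, whence $\ol N(r,1/(f-\alpha))\leq \tfrac{n}{p}T(r)+S(r)$, together with the analogous estimates got by interchanging $(\alpha,p)$ with $(\beta,t)$ and $f$ with $g$ (and $A$ with $A^{-1}$), each valid as long as the relevant one of $P(\alpha)/A,P(\beta)/A,AP(\alpha),AP(\beta)$ is not a critical value of $P$. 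And a divisor estimate: since $P(f)-P(\alpha)$ vanishes to order at least $p$ wherever $f=\alpha$,
$$\ol N\!\left(r,\frac{1}{P(f)-P(\alpha)}\right)\leq nT(r)-(p-1)\,\ol N\!\left(r,\frac{1}{f-\alpha}\right)+O(1),$$
together with the three companion inequalities for $P(f)-P(\beta)$, $P(g)-P(\alpha)$ and $P(g)-P(\beta)$.

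The next step would be to apply the Second Main Theorem to $f$ and to $g$ with the $2n+2-p-t$ distinct targets $\{\alpha,\beta\}\cup Z(Q_{1})\cup Z(Q_{2})$ (and/or with the preimage sets of $AP(\alpha),AP(\beta),P(\alpha)/A,P(\beta)/A$), and to $h$ with the values $0,1,\infty$, using that $h$ omits $1$, and then to add the resulting inequalities. Here one uses the identities expressing the counting functions of the zeros of $Q_{1}(f),Q_{2}(f),Q_{1}(g),Q_{2}(g)$ through those of $P(f)-P(\alpha),P(f)-P(\beta),P(g)-P(\alpha),P(g)-P(\beta)$, together with the observation that for $A\neq 1$ the ``special'' zeros of $f$ and those of $g$ are disjoint except possibly when $P(\alpha)=AP(\beta)$ or $P(\beta)=AP(\alpha)$; this makes many terms cancel. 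Feeding in the ramification and divisor estimates above and the pole-sharing $\ol N(r,f)=\ol N(r,g)$, the surviving inequality should reduce — in every case in which none of $P(\alpha)/A,P(\beta)/A,AP(\alpha),AP(\beta)$ is a critical value of $P$ — to an inequality of the shape $\max\{t,p\}+t+p<n+5$, contradicting the hypothesis. It then remains to treat the degenerate cases in which one of those four numbers is a critical value of $P$: here a direct computation with the factorisations shows that such a coincidence forces simultaneously $P(\alpha)=AP(\beta)$ and $P(\beta)=AP(\alpha)$, hence $A^{2}=1$; since $A\neq 1$ this gives $A=-1$ and then $P(\alpha)+P(\beta)=0$, contradicting the remaining hypothesis. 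In all cases one reaches a contradiction, so $A=1$, whence $P(f)\equiv P(g)$ and therefore $f\equiv g$ because $P$ is a uniqueness polynomial.

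The hard part will be the combination of Second Main Theorem estimates: making the cancellations between the $f$-inequality and the $g$-inequality precise, tracking which counting functions are ``heavy'' (multiplicity $\geq p$, respectively $\geq t$) so that the constants combine to exactly the threshold $\max\{t,p\}+t+p\geq n+5$ and not something weaker, and using the pole-identification $\ol N(r,f)=\ol N(r,g)$ at the right moment. Equally delicate is organising the case analysis so that every configuration in which the ramification estimates degenerate — because some of $P(\alpha)/A,P(\beta)/A,AP(\alpha),AP(\beta)$ happens to be a critical value of $P$ — is shown to collapse to $A=-1$, the one place where the hypothesis $P(\alpha)+P(\beta)\neq 0$ is actually needed.
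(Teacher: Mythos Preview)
Your reduction to showing $A=1$ matches the paper, but the route you propose is more elaborate than needed and contains a genuine error. The paper's argument is short: assuming (say) $t\geq p$, set $F=\dfrac{P(f)}{P(\beta)}-1$ and $G=\dfrac{P(g)}{P(\beta)}-1$, so that $F=AG+(A-1)$, and apply the Second Main Theorem once to $F$ with the four targets $\infty,\ 0,\ \dfrac{P(\alpha)}{P(\beta)}-1,\ A-1$. These correspond respectively to poles of $f$, to $P(f)=P(\beta)$, to $P(f)=P(\alpha)$, and (through the linear relation) to $P(g)=P(\beta)$; by critical injectivity the last three have at most $n-t+1$, $n-p+1$, $n-t+1$ distinct preimages, and one line of arithmetic yields $2t+p\leq n+4$, the contradiction. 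The only collision to watch is when the third and fourth targets coincide, i.e.\ $A=P(\alpha)/P(\beta)$; there the paper reruns the same idea on $G$ with targets $\infty,\ 0,\ -\dfrac{P(\alpha)-P(\beta)}{P(\alpha)},\ \dfrac{P(\alpha)-P(\beta)}{P(\beta)}$, and the hypothesis $P(\alpha)+P(\beta)\neq0$ is precisely what keeps the last two distinct. No auxiliary quotient $h$, no ramification transfer between $f$ and $g$, and no case analysis over all critical values of $P$ is needed.

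The concrete gap in your plan is the degenerate-case analysis. You assert that if one of $P(\alpha)/A,\ P(\beta)/A,\ AP(\alpha),\ AP(\beta)$ is a critical value of $P$ then a direct computation forces $P(\alpha)=AP(\beta)$ and $P(\beta)=AP(\alpha)$, hence $A=-1$. This is false in general: the hypothesis says only that $\alpha,\beta$ have \emph{maximal} multiplicity among the critical points, not that they are the only ones, so $P(\alpha)/A$ could perfectly well equal $P(d)$ for some third critical point $d$, and nothing then forces $A^{2}=1$. Separately, the heart of your argument---that the combined Second Main Theorem inequalities ``should reduce'' to $\max\{t,p\}+t+p<n+5$---is asserted but not carried out; note in particular that your ramification bound $\ol N(r,1/(f-\alpha))\leq\tfrac{n}{p}T(r)$ is, since $p\leq n$, never sharper than the trivial bound $T(r)$, so it is not clear what it actually contributes.
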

\begin{rem} As $\alpha$ and $\beta$ are critical points of $P$ so $t, p \geq 2$.
\end{rem}
\begin{exm} Consider the polynomial $$P_{FR}(z)=\frac{(n-1)(n-2)}{2}z^{n}-n(n-2)z^{n-1}+\frac{n(n-1)}{2}z^{n-2}-c,$$ where $n\geq6$ and $c \neq 0, 1,\frac{1}{2}$. 
\par
We see that 
 as $c \neq 0, 1$, $P_{FR}$ has only simple zeros .\par
Again as $c \neq \frac{1}{2}$, $P_{FR}(1)-P_{FR}(0)=1-2c \not=0$, it follows that $P$ is a critically injective polynomial.\par
Also $P_{FR}(z)-P_{FR}(1)=(z-1)^{3}R_{1}(z)$, where $R_{1}(z)$ has no multiple zero with $R_{1}(1)\not=0$ and  $P_{FR}(z)-P_{FR}(0)=z^{n-2}R_{2}(z)$,where $R_{2}(z)$ has no multiple zero with $R_{2}(0)\not=0$.\par Clearly, in view of Theorem A, $P_{FR}(z)$ is a uniqueness polynomial for $n\geq 5$.\par Thus using Theorem \ref{th1}, we get that $ P_{FR}(z)$ a SUPM if  $c \neq 0, 1,\frac{1}{2}$ and $\max\{n-2,3\}+(n-2)+3\geq 5+n$, i.e., $n\geq 6$.
\end{exm}
\begin{exm}\label{abn}  Consider the polynomial $$P_{B}(z)=\sum\limits_{i=0}^{m} \binom{m}{i}\frac{(-1)^{i}}{n+m+1-i}z^{n+m+1-i} + c,~c \neq0,-\lambda,-\frac{\lambda}{2},$$
where $\lambda=\sum\limits_{i=0}^{m} \binom{m}{i}\frac{(-1)^{i}}{n+m+1-i}$.\par
First we notice that in view of Lemma 2.2 of (\cite{1}) $\lambda \neq 0$.\par
Clearly $P'_{B}(z)=z^{n}(z-1)^{m}$, and as $c \neq 0, -\lambda$ $P_{B}$ has only simple zeros.\par
Again as $\lambda \neq 0$ we have $P_{B}(1) - P_{B}(0) \not=0$ and  hence $P_{B}$ is critically injective.
Also $P_{B}(z)-P_{B}(1)=(z-1)^{m+1}R_{3}(z)$, where $R_{3}(z)$ has no multiple zero with $R_{3}(1)\not=0$ and $P_{B}(z)-P_{B}(0)=z^{n+1}R_{4}(z)$, where $R_{4}(z)$ has no multiple zero with $R_{4}(0)\not=0$.\par
Now if $\min\{m,n\}\geq2$ and $m+n\geq5$, by Theorem A, $P_{B}(z)$ is a uniqueness polynomial.\par
Since $c \neq -\frac{\lambda}{2}$, $P_{B}(1) + P_{B}(0) \not=0$. So in view of Theorem \ref{th1}, $P_{B}(z)$ is a strong uniqueness polynomial if $\min\{m,n\}\geq2$ and $m+n\geq5$ and $\max\{m+1,n+1\}+(m+1)+(n+1)\geq 5+(m+n+1)$, i.e., $\max\{m,n\} \geq 3$. \end{exm}
\begin{rem} If we take $n=3,m=2$ or $n=2,m=3$ then by above discussion $P_{B}$ is a six degree strong uniqueness polynomial.
\end{rem}

Inspired by the above Example \ref{abn}, we first introduce most general form of $P_{B}(z)$ and we shall show that Theorem \ref{th1} is also applicable to it.
\begin{exm}\label{hii}
Let us define
$$P(z)=\sum\limits_{i=0}^{m} \sum\limits_{j=0}^{n} \binom{m}{i}\binom{n}{j}\frac{(-1)^{i+j}}{n+m+1-i-j}z^{n+m+1-i-j}a^{j}b^{i}+c=Q(z)+c,$$
where $a, b$ be two complex numbers such that $b\neq0$, $a\not=b$ and
$$c\not\in \{0,-Q(a),-Q(b),-\frac{Q(a)+Q(b)}{2}\}.$$

Clearly \beas P'(z) &=& \sum\limits_{i=0}^{m} \sum\limits_{j=0}^{n} \binom{m}{i}\binom{n}{j}(-1)^{i+j}z^{m+n-i-j}a^{j}b^{i}\\
&=& (\sum\limits_{i=0}^{m}(-1)^{i}\binom{m}{i}z^{m-i}b^{i})(\sum\limits_{j=0}^{n}\binom{n}{j}(-1)^{j}z^{n-j}a^{j})\\
&=& (z-b)^{m}(z-a)^{n}\eeas

Next we shall show that $P(z)$ is a critically injective polynomial. To this end first we note that
$P(z)-P(b)=(z-b)^{m+1}R_{5}(z)$ where $R_{5}(z)$ has no multiple zero and $R_{5}(b)\not=0$, and
 $P(z)-P(a)=(z-a)^{n+1}R_{6}(z)$ where $R_{6}(z)$ has no multiple zero and $R_{6}(a)\not=0$.\par
So $P(a)= P(b)$ implies $(z-b)^{m+1}R_{5}(z)=(z-a)^{n+1}R_{6}(z)$.
 As we choose $a \not=b$ so $R_{5}(z)$ has a factor $(z-a)^{n+1}$ which implies the polynomial $P$ is of degree at least $m+1+n+1$, a contradiction.\par
Not only that by the assumption on $c$ it is clear that $P(a)+P(b) \not=0$ and $P(a)P(b) \not=0$.\par Thus $P$ has no multiple zero.\par
Again by Theorem A, $P$ is a uniqueness polynomial if $m+n\geq 5$ and $\min\{m,n\}\geq 2$.\par
Thus if $m+n\geq 5$, $\max\{m,n\} \geq 3$ and $\min\{m,n\}\geq 2$ then $P$ is a strong uniqueness polynomial for meromorphic functions by Theorem \ref{th1}.
\end{exm}
\begin{rem} If we take $n=3, m=2$ or $n=2, m=3$ then by above discussion $P$ is a six degree strong uniqueness polynomial.
\end{rem}
\begin{cor}\label{hi}
If we take $a=0$ and $b=1$ in above example then we get Example \ref{abn}.
\end{cor}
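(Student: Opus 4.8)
The plan is to check, by direct substitution, that the specialization $a=0$, $b=1$ of the polynomial of Example~\ref{hii} is verbatim the polynomial $P_{B}$ of Example~\ref{abn}, and then to confirm that the side conditions (the admissible range of $c$ and the hypotheses ensuring strong uniqueness) agree in the two examples.

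First I would substitute $a=0$ and $b=1$ into
$$P(z)=\sum_{i=0}^{m}\sum_{j=0}^{n}\binom{m}{i}\binom{n}{j}\frac{(-1)^{i+j}}{n+m+1-i-j}z^{n+m+1-i-j}a^{j}b^{i}+c.$$
Since $b^{i}=1$ for all $i$, while $a^{j}=0^{j}$ vanishes for $j\geq 1$ and equals $1$ for $j=0$, only the $j=0$ terms survive, and for those $\binom{n}{0}=1$. Hence the double sum collapses to $\sum_{i=0}^{m}\binom{m}{i}\frac{(-1)^{i}}{n+m+1-i}z^{n+m+1-i}$, so that $P(z)$ is exactly the $P_{B}(z)$ of Example~\ref{abn}. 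As a consistency check, $P'(z)=(z-b)^{m}(z-a)^{n}$ specializes to $(z-1)^{m}z^{n}=z^{n}(z-1)^{m}=P_{B}'(z)$, and the factorizations $P(z)-P(b)=(z-b)^{m+1}R_{5}(z)$, $P(z)-P(a)=(z-a)^{n+1}R_{6}(z)$ turn into the factorizations of $P_{B}(z)-P_{B}(1)$ and $P_{B}(z)-P_{B}(0)$ recorded in Example~\ref{abn}.

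It then remains to reconcile the constraints. The requirements $b\neq 0$ and $a\neq b$ read $1\neq 0$ and $0\neq 1$, which hold automatically. For the constant, after the substitution one has $Q(z)=\sum_{i=0}^{m}\binom{m}{i}\frac{(-1)^{i}}{n+m+1-i}z^{n+m+1-i}$, whose monomials all have exponent $\geq n+1\geq 1$, so that $Q(0)=0$, while $Q(1)=\sum_{i=0}^{m}\binom{m}{i}\frac{(-1)^{i}}{n+m+1-i}=\lambda$. Consequently the excluded set $\{0,\,-Q(a),\,-Q(b),\,-\tfrac{Q(a)+Q(b)}{2}\}$ becomes $\{0,\,0,\,-\lambda,\,-\tfrac{\lambda}{2}\}=\{0,\,-\lambda,\,-\tfrac{\lambda}{2}\}$, which is precisely the set of values forbidden for $c$ in Example~\ref{abn}; and the hypotheses $m+n\geq 5$, $\min\{m,n\}\geq 2$, $\max\{m,n\}\geq 3$ under which Example~\ref{hii} furnishes a strong uniqueness polynomial are word for word those of Example~\ref{abn}. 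There is no genuine obstacle in any of this; the only point that deserves a line of argument is the evaluation $Q(0)=0$ at the specialized critical point $a=0$, which explains why the entry $-Q(a)$ contributes nothing new to the list of forbidden constants.
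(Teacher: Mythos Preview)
Your verification is correct and is exactly the natural check one would make; the paper itself states the corollary without proof, treating the specialization as immediate, so your argument simply fills in the details the paper left to the reader.
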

\begin{rem}
If we take $a=0$ and $b\not=0$ in the previous example, then we have the following polynomial :
$$P(z)=\sum\limits_{i=0}^{m} \binom{m}{i}\frac{(-1)^{i}}{n+m+1-i}z^{n+m+1-i}b^{i} + c,$$
where $bc \neq 0$, $c\not=-b^{n+m+1}\lambda,\frac{-b^{n+m+1}\lambda}{2}$, where $\lambda$ is defined as in the previous example,
then clearly when $m+n\geq 5$, $\max\{m,n\}\geq 3$ and $\min\{m,n\}\geq 2$, $P$ is a strong uniqueness polynomial.
\end{rem}
\begin{rem}
The above examples are related to the strong uniqueness polynomials with two critical points. Now we are giving the following example where there are more than two critical points, and in view of Theorem 2.1, one can easily verify that it is a strong uniqueness polynomial.
\end{rem}
\begin{exm}
 Consider the polynomial $P(z)= z^{n}-\frac{n}{m}z^{m}+b$ where $n-m\geq 2$. Then it is clear that $P$ has at least three critical points.\par
 As $P'(z)=nz^{m-1}(z^{n-m}-1)$, so $P(z)-P(1)=(z-1)^{2}T_{1}(z)$, where $T_{1}(1)\not=0$ and $P(z)-P(0)=z^{m}T_{2}(z)$, where $T_{2}(0)\not=0$.\par
In view of Example 1.3 we have already seen that $P(z)$ is a uniqueness polynomial for $n-m\geq2$, $\gcd(m,n)=1$ and $n\geq5$.\par
 Thus
 applying Theorem \ref{th1}, $P$ is a strong uniqueness polynomial when $b \not\in\{0,\frac{n-m}{m},\frac{n-m}{2m}\}$ and $\max\{m,2\}+m-n\geq3$, $n-m\geq2$, $\gcd(m,n)=1$, $n\geq5$.
\begin{rem}
For $n=7, m=5$ with proper choice of $b$ we can have seven degree Hong-Xun Yi type strong uniqueness polynomial.
\end{rem}
\end{exm}
\begin{theo}\label{th41} Suppose $P$ is a critically injective uniqueness polynomial of degree $n$  with simple zeros having at least two critical points, say  $\gamma$ and $\delta$.
Assuming that the total number of $P(\gamma)$ and $P(\delta)$ points of $P$ are respectively $p$ and $q$ with  $|p-q|\geq 3$. If for any complex number $d \not\in\{P(\gamma),P(\delta)\}$, $(P(z)-d)$ has at least $\min\{p+3,q+3\}$ distinct zeros then $P(z)$ is a strong uniqueness polynomial.
\end{theo}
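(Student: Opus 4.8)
The plan is to show that $P(f)\equiv AP(g)$ for non-constant meromorphic $f,g$ can only happen when $A=1$, at which point the uniqueness polynomial hypothesis yields $f\equiv g$ and hence that $P$ is a strong uniqueness polynomial. So I would fix such $f,g$ together with $A\neq1$ and derive a contradiction. Three preliminary remarks set the stage. Since $\gamma,\delta$ are zeros of $P'$ while $P$ has only simple zeros, neither $P(\gamma)$ nor $P(\delta)$ is $0$; since $T(r,P(h))=nT(r,h)+O(1)$ for every meromorphic $h$, the relation $P(f)\equiv AP(g)$ gives $T(r,f)=T(r,g)+O(1)$, so I may work with a single small term $S(r)$; and because every hypothesis and the conclusion are symmetric under interchanging the two distinguished critical points together with the data $p,q$, I may assume $q\ge p+3$, whence $\min\{p+3,q+3\}=p+3$. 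Write $\beta_1,\ldots,\beta_p$ for the distinct roots of $P(z)=P(\gamma)$ and $\mu_1,\ldots,\mu_q$ for those of $P(z)=P(\delta)$; throughout I would use the elementary identity $\overline{N}\bigl(r,0;P(h)-c\bigr)=\sum_w\overline{N}(r,w;h)$, the sum being over the distinct roots $w$ of $P(z)=c$ (distinct such roots have disjoint $h$-preimages).

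The mechanism is to rewrite the hypothesis as $P(f)-P(\gamma)\equiv A\bigl(P(g)-P(\gamma)/A\bigr)$. The two sides being the same function up to the nonzero factor $A$, they have the same zeros with multiplicities, so
$$\sum_{j=1}^{p}\overline{N}(r,\beta_j;f)=\overline{N}\bigl(r,0;P(f)-P(\gamma)\bigr)=\overline{N}\bigl(r,0;P(g)-P(\gamma)/A\bigr).$$
Now I would split on the value $P(\gamma)/A$. It is not $P(\gamma)$, since $P(\gamma)\neq0$ and $A\neq1$. If $P(\gamma)/A\notin\{P(\gamma),P(\delta)\}$, the hypothesis furnishes at least $\min\{p+3,q+3\}=p+3$ distinct roots $w_1,\ldots,w_s$ of $P(z)=P(\gamma)/A$; applying the Second Main Theorem to $g$ with these $s$ targets ($s\ge p+3$) and using the display together with $\overline{N}(r,\beta_j;f)\le T(r,f)+O(1)=T(r,g)+O(1)$, I obtain $(s-2)T(r,g)\le pT(r,g)+S(r)$, i.e. $(s-p-2)T(r,g)\le S(r)$, which is absurd since $s-p-2\ge1$. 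In the only remaining case $P(\gamma)/A=P(\delta)$, the rewritten relation reads $P(f)-P(\gamma)\equiv A\bigl(P(g)-P(\delta)\bigr)$, so $\sum_{j=1}^{p}\overline{N}(r,\beta_j;f)=\sum_{k=1}^{q}\overline{N}(r,\mu_k;g)$, and the Second Main Theorem applied to $g$ with the $q$ targets $\mu_k$ gives $(q-2)T(r,g)\le pT(r,g)+S(r)$, i.e. $(q-p-2)T(r,g)\le S(r)$, again absurd because $q-p-2\ge1$. In every case we reach a contradiction, so $A=1$; then $P(f)\equiv P(g)$, the uniqueness polynomial property forces $f\equiv g$, and $P$ is a strong uniqueness polynomial.

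I expect the one genuinely delicate point to be the case analysis on $P(\gamma)/A$: the quantitative hypothesis on the number of distinct roots of $P(z)-d$ is only granted for $d\notin\{P(\gamma),P(\delta)\}$, so one must notice that the residual subcase $P(\gamma)/A=P(\delta)$ is instead handled by the plain fact that $P(z)=P(\delta)$ has exactly $q$ roots, and that it is precisely the spread $|p-q|\ge3$ that makes the coefficient of $T(r,g)$ strictly positive in both subcases. The reduction to $q\ge p+3$ needs only a one-line remark on the symmetry of the statement in the two critical points, and the passage from $P(f)\equiv AP(g)$ to $T(r,f)=T(r,g)+O(1)$ is routine.
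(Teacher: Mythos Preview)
Your argument is correct. Both you and the paper obtain a contradiction from $P(f)\equiv AP(g)$ with $A\neq1$ by rewriting it as $P(f)-c_1=A\bigl(P(g)-c_2\bigr)$ for suitably chosen $c_1,c_2$ and then applying the Second Fundamental Theorem, but the decompositions differ. The paper does \emph{not} invoke the symmetry $(\gamma,p)\leftrightarrow(\delta,q)$: it first splits on whether $A=P(\gamma)$, then on whether $P(\delta)=1$, then on whether $P(\gamma)=-1$, and in the complementary case on whether $AP(\delta)=P(\gamma)$; in the last subcase it derives the two inequalities $(p-2)T(r,f)\le qT(r,g)+S$ and $(q-2)T(r,g)\le pT(r,f)+S$ and uses $|p-q|\ge3$ in whichever direction applies. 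Your normalization $q\ge p+3$ collapses all of this to just two cases, $P(\gamma)/A\notin\{P(\gamma),P(\delta)\}$ versus $P(\gamma)/A=P(\delta)$, with the relevant coefficient of $T(r,g)$ automatically positive in each. Your route is shorter and makes the division of labor between the two hypotheses transparent (the lower bound $\min\{p+3,q+3\}$ handles the generic value, the gap $|p-q|\ge3$ handles the exceptional one); the paper's route avoids the without-loss-of-generality step at the price of a longer case tree.
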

The following Corollary is an immediate consequence of the above Theorem.
\begin{cor}\label{cor2.1} Suppose $P$ is a critically injective uniqueness polynomial of degree $n$  with simple zeros having at least two critical points, say  $\gamma$ and $\delta$.
 Assuming the total number of $P(\gamma)$ and $P(\delta)$ points of $P$ are respectively $p$ and $q$.
If for any complex number $d \not\in \{P(\gamma),P(\delta)\}$, $(P(z)-d)$ has at least $q+3$ distinct zeros and $P(\delta)=1$, $(P(\gamma))^{2} \not=0,1$ then $P(z)$ is a strong uniqueness polynomial.
\end{cor}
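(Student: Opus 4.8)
The plan is to deduce this corollary from Theorem \ref{th41} together with the argument used to establish it. Since $P$ is already assumed to be a uniqueness polynomial, it suffices to prove that whenever $P(f)\equiv AP(g)$ for non-constant meromorphic functions $f,g$ and a non-zero constant $A$, one necessarily has $A=1$: indeed $A=1$ gives $P(f)\equiv P(g)$, and then $f\equiv g$ by the UPM property of $P$. So I would fix such $f,g,A$ and aim at $A=1$.

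The first observation is that the hypothesis of the corollary --- that $(P(z)-d)$ has at least $q+3$ distinct zeros for every $d\notin\{P(\gamma),P(\delta)\}$ --- is stronger than the hypothesis ``at least $\min\{p+3,q+3\}$ distinct zeros'' appearing in Theorem \ref{th41}. Hence if in addition $|p-q|\geq 3$, then $P$ is already a strong uniqueness polynomial by Theorem \ref{th41}, and there is nothing to prove. The substantive case is therefore $|p-q|\leq 2$, which is precisely the reason for imposing the extra conditions $P(\delta)=1$ and $(P(\gamma))^{2}\neq 0,1$; I would assume $|p-q|\leq 2$ from now on.

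Next I would replay the core of the proof of Theorem \ref{th41}. Comparing, via the second main theorem, the ramification of $f$ and of $g$ that is forced by $P(f)\equiv AP(g)$, one obtains that the fibre of $P$ over $P(\gamma)$ (respectively over $P(\delta)$) must be matched --- in size and in multiplicity structure --- with the fibre of $P$ over $P(\gamma)/A$ (respectively over $P(\delta)/A$). In the range $|p-q|\leq 2$ the distinct-zero count forces $P(\gamma)$ and $P(\delta)$ to carry the two strictly smallest fibres of $P$, while critical injectivity keeps these two fibres apart from the fibres over every other critical value. Consequently multiplication by $A$ must map $\{P(\gamma),P(\delta)\}$ into itself: either it fixes both, in which case $AP(\gamma)=P(\gamma)$ and $AP(\delta)=P(\delta)$ give $A=1$ at once; or it interchanges them, in which case $AP(\gamma)=P(\delta)$ and $AP(\delta)=P(\gamma)$. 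In the latter case, multiplying the two relations and using $P(\delta)=1\neq 0$ and $P(\gamma)\neq 0$ (the latter being part of $(P(\gamma))^{2}\neq 0$) yields $A^{2}=1$; since $A\neq 1$ we get $A=-1$, and then $AP(\gamma)=P(\delta)$ forces $P(\gamma)=-1$, i.e. $(P(\gamma))^{2}=1$, contradicting the hypothesis. Hence $A=1$ in every case, and $f\equiv g$.

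The main obstacle I expect is the structural one inherited from Theorem \ref{th41}: justifying rigorously that $P(f)\equiv AP(g)$ with $A\neq 1$ leaves no option besides the clean alternative ``fix both special fibres or swap them'' --- in particular, ruling out that $P(\gamma)$ or $P(\delta)$ is carried by multiplication by $A$ onto a non-critical value, or onto the critical value of some third critical point. This is where the distinct-zero hypothesis, critical injectivity, and the standard Nevanlinna estimates on the truncated counting functions of $f-d_{i}$ and $g-d_{i}$ have to be combined; once that is secured, the remaining arithmetic with $A$, $P(\gamma)$ and $P(\delta)$ is routine.
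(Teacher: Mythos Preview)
Your split on $|p-q|$ is harmless but unnecessary, and the fibre-permutation argument you propose for $|p-q|\le 2$ has a real gap. The second main theorem does \emph{not} give that the fibre over $P(\gamma)/A$ matches the fibre over $P(\gamma)$ ``in size and in multiplicity structure''; applied to $g$ via $P(f)-P(\gamma)=A\bigl(P(g)-P(\gamma)/A\bigr)$ it only yields
\[
\bigl(\#\{\text{zeros of }P(z)-P(\gamma)/A\}-2\bigr)\,T(r,g)\le p\,T(r,f)+S(r,f),
\]
so that fibre may have as many as $p+2$ elements. When $p=q+1$ or $p=q+2$ (which your case $|p-q|\le 2$ allows) this is $\ge q+3$, so the hypothesis ``$P(z)-d$ has at least $q+3$ distinct zeros for $d\notin\{P(\gamma),P(\delta)\}$'' yields no contradiction, and you cannot force $P(\gamma)/A\in\{P(\gamma),P(\delta)\}$. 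Hence the conclusion that multiplication by $A$ permutes $\{P(\gamma),P(\delta)\}$ is unjustified; the obstacle you flag at the end is not just a matter of writing out Nevanlinna estimates carefully --- the estimate actually comes out on the wrong side.

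The paper simply reruns the case analysis of the proof of Theorem~\ref{th41} and observes that the extra hypotheses $P(\delta)=1$ and $P(\gamma)\neq 0,\pm 1$ eliminate exactly the subcases in which $|p-q|\ge 3$ was used: Subcase~1.1 is void because $P(\delta)=1$; Subsubcase~1.2.1 is void because $P(\gamma)\neq -1$; and Subcase~2.2, namely $AP(\delta)=P(\gamma)$, becomes $A=P(\gamma)$, contradicting the Case~2 assumption. The two surviving subcases (1.2.2 and 2.1) both pivot on $P(\delta)$, so the right-hand side of the SMT inequality is $q\,T(r,\cdot)$ and the ``$\ge q+3$'' hypothesis suffices. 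In your language, the fix is to use $P(\delta)$ in \emph{both} directions and never touch $P(\gamma)$: from $P(f)-A\cdot 1=A\bigl(P(g)-P(\delta)\bigr)$ and SMT on $f$ one gets $A\in\{P(\gamma),1\}$, hence $A=P(\gamma)$; from $P(f)-P(\delta)=A\bigl(P(g)-1/A\bigr)$ and SMT on $g$ one gets $1/A\in\{P(\gamma),1\}$, hence $A=1/P(\gamma)$; together $P(\gamma)^{2}=1$, the desired contradiction.
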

\begin{exm}\label{h}
Consider the polynomial
$$P(z)=\sum\limits_{i=0}^{m} \binom{m}{i}\frac{(-1)^{i}}{n+m+1-i}z^{n+m+1-i}b^{i} + 1,$$
where we choose $b (\neq0)$ such a manner that $b^{n+m+1}\sum\limits_{i=0}^{m} \binom{m}{i}\frac{(-1)^{i}}{n+m+1-i}\not= -1,-2$.\par
We note that $P'(z)=(z-b)^{m}z^{n}$.
As $\min\{m,n\}\geq 2$, with the suitable choice of $b$, $P$ has no multiple zero. Also $P$ is critically injective.\par
If we take $m,n\in\mathbb{N}$ with $m+n\geq 5$ and $\min\{m,n\}\geq 2$ then by Theorem A, $P$ is a uniqueness polynomial. \par Clearly for any complex number $d \in \mathbb{C}\backslash\{P(0),P(b)\}$, $(P(z)-d)$ has exactly $m+n+1$ distinct zeros, otherwise there exist at least one complex number $\varsigma$ which is a zero of $(P(z)-d)$ of multiplicity at least 2. Consequently $P(\varsigma)=d$ and  $P'(\varsigma)=0$. That is, $d \in \{P(0),P(b)\}$, which is not possible.\par
So in view of Corollary \ref{cor2.1}, $P$ is a strong uniqueness polynomial if  $m+n\geq 5$, $\min\{m,n\}\geq 2$ and $n\geq 3$.
\end{exm}
\begin{rem}
The above Example gives the answer of the question raised in the paper (\cite{1}).
\end{rem}
We have observed from Example \ref{kly} that uniqueness polynomial may contain multiple zeros. However the two theorems so far stated are dealing with strong uniqueness polynomials with simple zeros. So natural question would be whether there exist a strong uniqueness polynomial which has multiple zeros? The next theorem shows that the answer is affirmative.\par
Next we shall demonstrate the following strong uniqueness polynomial with multiple zero of degree $n\geq 6$.
\begin{theo}\label{th2}
Let $$P(z)=z^{n}+az^{n-1}+bz^{n-2},$$ where $ab \neq0$ and $a^{2}=\lambda b$ where $\lambda =4(1-\frac{1}{(n-1)^{2}})$, then $P(z)$ is a strong uniqueness polynomial of degree $n\geq 6$.
\end{theo}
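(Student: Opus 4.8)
The plan is to exploit the special shape $P(z)=z^{n-2}(z^{2}+az+b)$, which has a zero of order $n-2$ at the origin: the Fujimoto‑type results and Theorem~\ref{th1} do not cover this case for $n=6,7$ (Theorem~D only reaches $n\ge 8$), so for all $n\ge 6$ I would argue directly, via the classical $h=f/g$ substitution together with a ramification count. First one records the structure of $P$: $P'(z)=z^{n-3}\bigl(nz^{2}+a(n-1)z+b(n-2)\bigr)$, and the hypothesis $a^{2}=\lambda b$, equivalently $a^{2}(n-1)^{2}=4nb(n-2)$, is exactly the vanishing of the discriminant of the quadratic factor; hence $P'(z)=nz^{n-3}(z-c)^{2}$ with $c=-\tfrac{a(n-1)}{2n}\neq0$. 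Thus $P$ has exactly two critical points, $0$ of multiplicity $n-3$ and $c$ of multiplicity $2$, and a short computation gives $P(c)=\tfrac{2b}{n(n-1)}c^{\,n-2}\neq0=P(0)$, so $P$ is critically injective. Since $(n-3)\cdot2>(n-3)+2$ precisely when $n\ge6$, Fujimoto's criterion (Theorem~A) shows $P$ is a uniqueness polynomial for $n\ge6$; in particular, the case $A=1$ of $P(f)\equiv AP(g)$ already yields $f\equiv g$.

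So assume $P(f)\equiv AP(g)$ with $f,g$ non‑constant meromorphic and $A\neq0,1$, and put $h=f/g$ (meromorphic, as $g\not\equiv0$). Substituting $f=hg$ and cancelling $g^{n-2}$ turns the identity into
\[
(h^{n}-A)\,g^{2}+a(h^{n-1}-A)\,g+b(h^{n-2}-A)=0 .
\]
If $h$ were constant, then $g$ being non‑constant all three coefficients would vanish, forcing $h^{n-2}=h^{n-1}=h^{n}=A$, hence $h=1$ and $A=1$, a contradiction; so $h$ is non‑constant and $h^{n}\not\equiv A$. Solving the quadratic, $\phi:=2(h^{n}-A)g+a(h^{n-1}-A)$ is meromorphic and $\phi^{2}\equiv D(h)$, where $D(h)=a^{2}(h^{n-1}-A)^{2}-4b(h^{n}-A)(h^{n-2}-A)$.

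The heart of the matter is to study $D$ as a polynomial in $h$. Using $a^{2}=\lambda b$ it simplifies to
\[
D(h)=\tfrac{4b}{(n-1)^{2}}\,E(h),\qquad E(h)=A(n-1)^{2}h^{n-2}(h-1)^{2}-(h^{n-1}-A)^{2},
\]
a polynomial of degree $2n-2$ with $E(0)=-A^{2}\neq0$. I would then show that $E$ has at most one multiple root: writing $E'(h)=(n-1)h^{n-3}G(h)$ with $\deg G=n$ and eliminating between $E$ and $E'$, any common zero $\rho\neq0,1$ must satisfy $4\rho^{n}=A\bigl(n\rho-(n-2)\bigr)^{2}$, and substituting this back into $E(\rho)=0$ forces $\rho=\rho^{*}:=\tfrac{(n-2)(3n-4)}{n(3n-2)}$ (the value $\rho=1$ is a common zero only when $A=1$). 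Since $G'''(h)=-2n(n-1)(n-2)h^{n-3}$ does not vanish at $\rho^{*}\neq0$, the multiplicity of $\rho^{*}$ in $E$ is at most $4$. Hence at least $2n-6\ge6$ of the roots of $D$ are simple, in particular of odd order. Because $\phi^{2}\equiv D(h)$, every root of $D$ of odd order is either omitted by $h$ or attained only with multiplicity $\ge2$; having five such values contradicts the second main theorem, since then $\bigl(\tfrac{q}{2}-2\bigr)T(r,h)\le S(r,h)$ with $q\ge5$ would force $h$ to be constant. This contradiction completes the proof; together with the uniqueness‑polynomial property (the case $A=1$) it gives $f\equiv g$, so $P$ is a strong uniqueness polynomial for $n\ge6$.

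The step I expect to be the real obstacle is precisely this discriminant analysis: one must verify, uniformly in the free constant $A$, that the simplification afforded by $a^{2}=\lambda b$ leaves $D(h)$ with at least five roots of odd order, so that the ramification estimate beats the second main theorem already for $n=6$ and $n=7$ and not merely for $n\ge8$ — a careless count only reproduces the weaker range accessible through Theorem~D.
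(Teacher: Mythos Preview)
Your approach is essentially the paper's: set $h=f/g$, reduce $P(f)=AP(g)$ to a quadratic in $g$, and study the discriminant $D(h)=b\psi(h)$ so that $\phi^{2}=D(h)$ forces enough ramification on $h$ to contradict the second main theorem. Two points deserve comment.

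For $A=1$ you invoke Theorem~A to conclude that $P$ is already a UPM. This is legitimate --- Fujimoto's criterion needs only critical injectivity, not simple zeros --- but note that the paper deliberately treats $A=1$ directly from the discriminant, because $P$ has a zero of order $n-2$ at the origin and the paper's own framing of Theorem~A is for polynomials with simple zeros. Your route is shorter; the paper's is self-contained and in that case also exploits the $n$-th roots of unity (the zeros of $h^{n}-1$) as extra targets in the second main theorem.

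For $A\neq 1$ your elimination contains a slip. From $G(\rho)=0$ one has $\rho^{\,n-1}=A+\dfrac{A(n-1)(\rho-1)(n\rho-(n-2))}{2\rho}$; combining this with your relation $4\rho^{\,n}=A\bigl(n\rho-(n-2)\bigr)^{2}$ and setting $u=n\rho-(n-2)$ gives, after simplification, $(n-2)(u-2)^{2}=0$, i.e.\ $\rho=1$, not the value $\rho^{*}=\tfrac{(n-2)(3n-4)}{n(3n-2)}$ you state. Since $E(1)=-(1-A)^{2}\neq 0$ for $A\neq 1$, this actually shows $E$ has \emph{no} multiple root whatsoever in this case --- exactly the content of the paper's Lemma~\ref{s1}, proved there via the substitution $t=e^{w}$ and a $\cosh/\sinh$ identity. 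Your algebraic resultant is a clean alternative to that lemma; it only needs the arithmetic fixed. With all $2n-2$ roots simple, your ramification count already yields a contradiction for $n\geq 4$ in the case $A\neq 1$, matching the paper, and the overall bound $n\geq 6$ then comes (in both arguments) from the $A=1$ case.
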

\begin{cor}
Let $$P(z)=z^{n}+az^{n-1}+bz^{n-2}+c,$$ where $ab \neq0$ and $a^{2}=\lambda b$ where $\lambda =4(1-\frac{1}{(n-1)^{2}})$, then $P(z)$ is a uniqueness polynomial of degree $n\geq 6$.
\end{cor}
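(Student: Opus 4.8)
The Corollary is an immediate consequence of Theorem~\ref{th2}. Write $Q(z)=z^{n}+az^{n-1}+bz^{n-2}$, so that $P(z)=Q(z)+c$. By Theorem~\ref{th2}, under the very same hypotheses $ab\neq 0$, $a^{2}=\lambda b$, the polynomial $Q$ is a strong uniqueness polynomial for meromorphic functions, hence in particular a uniqueness polynomial. For non-constant meromorphic $f,g$ the identity $P(f)\equiv P(g)$ is equivalent to $Q(f)\equiv Q(g)$, which forces $f\equiv g$; and $\deg P=n\ge 6$. Thus $P$ is a UPM of degree $n\ge 6$. Consequently the whole substance lies in Theorem~\ref{th2}, whose proof I now outline.

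First I would record the shape of $Q$. Since $Q(z)=z^{n-2}(z^{2}+az+b)$ one gets $Q'(z)=z^{n-3}\bigl(nz^{2}+(n-1)az+(n-2)b\bigr)$, and the discriminant of the quadratic factor equals $(n-1)^{2}a^{2}-4n(n-2)b$, which vanishes precisely because $a^{2}=\lambda b=\tfrac{4n(n-2)}{(n-1)^{2}}\,b$. Hence $Q'(z)=nz^{n-3}(z-z_{0})^{2}$ with $z_{0}=-\tfrac{(n-1)a}{2n}\neq 0$, so $Q$ has exactly two critical points, $0$ of multiplicity $n-3$ and $z_{0}$ of multiplicity $2$. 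If $z_{0}$ were a zero of $z^{2}+az+b$ then, being also a root of $nz^{2}+(n-1)az+(n-2)b$, it would satisfy $z_{0}=-2b/a$, and comparing with $z_{0}=-(n-1)a/(2n)$ yields $n-1=n-2$, a contradiction; therefore $Q(z_{0})\neq 0=Q(0)$ and $Q$ is critically injective. Also $a^{2}-4b=(\lambda-4)b=-\tfrac{4b}{(n-1)^{2}}\neq 0$, so $z^{2}+az+b$ has two distinct nonzero roots $\mu_{1},\mu_{2}$, and apart from the zero $z=0$ of order $n-2$ the polynomial $Q$ has only simple zeros. Finally, with $k=2$, $q_{1}=n-3\ge 3$ and $q_{2}=2$ we have $q_{1}q_{2}=2(n-3)>n-1=q_{1}+q_{2}$ for $n\ge 6$, so by Theorem~A the polynomial $Q$ is a uniqueness polynomial for $n\ge 6$.

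It therefore suffices to prove that $Q(f)\equiv AQ(g)$ with $A\neq 0$ forces $A=1$; the uniqueness-polynomial property then gives $f\equiv g$, and $Q$ is a SUPM. Write $Q(z)=z^{n-2}(z-\mu_{1})(z-\mu_{2})$. The identity $Q(f)=AQ(g)$ shows that $Q(f)$ and $Q(g)$ have the same zero- and pole-divisors; in particular $f$ and $g$ have identical poles (a pole of $f$ of multiplicity $m$ being a pole of $Q(f)$ of multiplicity $nm$), and since $T(r,Q(f))=nT(r,f)+O(1)$ we get $T(r,f)=T(r,g)+O(1)$. The key point is the matching of zeros: at a common zero of $Q(f)$ and $Q(g)$ the multiplicity contributed by $f$ is either $(n-2)$ times the multiplicity of a $0$-point of $f$, or the multiplicity of a $\mu_{j}$-point of $f$, and likewise for $g$; since $n-2\ge 4$, a point where $f$ has a $0$-point while $g$ has a $\mu_{j}$-point (or vice versa) is forced to be heavily ramified on the $\mu$-side. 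Feeding this into the second fundamental theorem applied to $f$ and to $g$ with the four values $0,\mu_{1},\mu_{2},\infty$, together with the identity of zero- and pole-divisors, one should obtain a rigid system of inequalities which pins the value distributions of $f$ and $g$ down so tightly that the ``mismatched'' correspondences cannot occur and $f,g$ share $0,\mu_{1},\mu_{2}$ in the natural way. From this rigidity I would extract that $u:=f/g$ is a constant. Once $u$ is constant, substituting $f=ug$ into $f^{n-2}(f^{2}+af+b)=Ag^{n-2}(g^{2}+ag+b)$ gives $(u^{n}-A)g^{2}+a(u^{n-1}-A)g+b(u^{n-2}-A)\equiv 0$, and since $1,g,g^{2}$ are linearly independent over $\mathbb{C}$ (as $g$ is non-constant) and $ab\neq 0$ we obtain $u^{n}=u^{n-1}=u^{n-2}=A$, whence $u=1$ and $A=1$.

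The main obstacle is exactly the passage in the previous paragraph from ``$Q(f)=AQ(g)$'' to ``$f/g$ is constant'': this is the delicate value-distribution bookkeeping with truncated counting functions, and it is where the hypotheses $a^{2}=\lambda b$ (which force the two-critical-point profile with $q_{1}=n-3$, $q_{2}=2$) and $n\ge 6$ are genuinely used --- the latter because for $n=6,7$ one is below the range covered by Fujimoto's Theorem~D, so the estimates must be sharp rather than a direct appeal to that theorem. A secondary nuisance is that $Q$ carries the multiple zero $z=0$, so the standard simple-zero arguments (such as those underlying Theorems~\ref{th1} and \ref{th41}) do not apply verbatim and the counting near the $0$-points must track the factor $(n-2)$ explicitly.
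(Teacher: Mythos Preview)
Your reduction of the Corollary to Theorem~\ref{th2} (write $P=Q+c$, so $P(f)\equiv P(g)\iff Q(f)\equiv Q(g)$) is correct and is exactly how the paper treats it: the Corollary is stated without a separate proof, as immediate from Theorem~\ref{th2}.

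Where you diverge is in the outline of Theorem~\ref{th2} itself, and there the key step is genuinely missing. The paper does \emph{not} first argue that $Q$ is a UPM via Theorem~A and then separately force $A=1$ by Nevanlinna estimates on $f$ and $g$. Instead it sets $h=f/g$ from the outset and, assuming $h$ non-constant, completes the square in the relation $g^{2}(h^{n}-A)+ag(h^{n-1}-A)+b(h^{n-2}-A)=0$ to obtain
\[
\Bigl(g+\tfrac{a}{2}\,\tfrac{h^{n-1}-A}{h^{n}-A}\Bigr)^{2}=\frac{b\,\psi(h)}{4(h^{n}-A)^{2}},\qquad \psi(t)=\lambda(t^{n-1}-A)^{2}-4(t^{n-2}-A)(t^{n}-A).
\]
Three preparatory lemmas show that for $A\neq1$ the zeros of $\psi$ are all simple and disjoint from those of $t^{n}-A$, while for $A=1$ the only multiple zero of $\psi$ is $t=1$ (of order~$4$). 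Since the left side is a perfect square, each simple zero $\zeta_i$ of $\psi$ forces the $\zeta_i$-points of $h$ to have even multiplicity, so $\overline N(r,\zeta_i;h)\le\tfrac12 N(r,\zeta_i;h)$. One then applies the second fundamental theorem to $h$ with these $2n-2$ (resp.\ $2n-6$ plus the $n-1$ nontrivial $n$-th roots of unity) target values and reaches a contradiction for $n\ge4$ (resp.\ $n\ge6$). Thus $h$ is constant, and your final paragraph (linear independence of $1,g,g^{2}$) finishes as you wrote.

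Your alternative route---apply the second fundamental theorem to $f$ and $g$ with $0,\mu_1,\mu_2,\infty$ and hope the ramification bookkeeping is rigid enough to force $h$ constant---is not carried out, and as stated it does not close: even if you succeeded in showing that $f$ and $g$ share $0$ and $\infty$ CM, that only gives $f/g$ zero- and pole-free, hence of the form $e^{\varphi}$ with $\varphi$ entire, not a constant. The paper's square-completion trick is precisely what converts the problem into Nevanlinna estimates on $h$ alone, where ``many values with forced even multiplicity'' does yield a clean contradiction. A smaller point: Theorem~A in this paper is framed for polynomials without multiple zeros, whereas $Q$ has a zero of order $n-2$ at~$0$, so your appeal to it for $Q$ would need separate justification; the paper's argument sidesteps this entirely.
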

\begin{rem}
It is easy to see that if $P(z)$ is strong uniqueness polynomial then for any non-zero constants $a,c$, $P(af+b)=cP(ag+b)$ gives $(af+b)=(ag+b)$, i.e, $P(az+b)$ is also strong uniqueness polynomial.
\end{rem}
\section{Lemmas}

\begin{lem} \label{s1} If $$\psi(t)=\lambda(t^{n-1}-A)^{2}-4(t^{n-2}-A)(t^{n}-A)$$ where $\lambda =4(1-\frac{1}{(n-1)^{2}})$ and $A\neq1,0$ then $\psi(t)=0$ has no multiple roots.
\end{lem}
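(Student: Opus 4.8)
The plan is to first replace $\psi$ by an equivalent but far more transparent polynomial, and then argue by contradiction. First I would verify the algebraic identity
\[ \psi(t) = -\tfrac{4}{(n-1)^2}\Big[(t^{n-1}-A)^2 - (n-1)^2 A\, t^{n-2}(t-1)^2\Big], \]
which drops out of expanding both sides once one uses $\lambda-4=-\tfrac{4}{(n-1)^2}$ and $\tfrac{(n-1)^2\lambda}{2}=2n(n-2)$. Since the two sides differ only by a nonzero constant, it suffices to show that $g(t):=(t^{n-1}-A)^2-(n-1)^2A\,t^{n-2}(t-1)^2$ has no multiple root.

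Assume for contradiction that $g(t_0)=g'(t_0)=0$. Since $g(0)=A^2\neq0$ and $g(1)=(1-A)^2\neq0$ (using $A\neq0,1$), necessarily $t_0\notin\{0,1\}$. Then I would compute $g'(t)=2(n-1)t^{n-2}(t^{n-1}-A)-(n-1)^2A\,t^{n-3}(t-1)(nt-n+2)$, divide $g'(t_0)=0$ by $(n-1)t_0^{n-3}$ to get $2t_0(t_0^{n-1}-A)=(n-1)A(t_0-1)(nt_0-n+2)$, solve this for $t_0^{n-1}-A$, substitute into $g(t_0)=0$, and cancel the nonzero factor $(n-1)^2A(t_0-1)^2$; this yields $A(nt_0-n+2)^2=4t_0^n$. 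Eliminating $t_0^n=t_0\cdot t_0^{n-1}$ by means of the previous relation turns this into a single polynomial identity in $t_0$.

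The last step is to simplify that identity: setting $v=nt_0-n+2$, so $t_0=\tfrac{v+n-2}{n}$ and $t_0-1=\tfrac{v-2}{n}$, it collapses to $(2-n)v^2+4(n-2)v-4(n-2)=0$, and dividing by $-(n-2)$ (allowed since $n\ge6$) gives $(v-2)^2=0$, hence $v=2$, i.e. $t_0=1$ — contradicting $t_0\neq1$. Thus $g$, and therefore $\psi$, has no multiple root. The computations are routine throughout; the only real obstacle is spotting the identity in the first step, after which everything reduces to a short elimination, the one thing to watch being that each cancellation (by $A$, by $t_0$, by $t_0-1$, and by $n-2$) is indeed licensed by the hypotheses $A\neq0,1$ and $n\ge6$.
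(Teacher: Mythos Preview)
Your argument is correct. The key identity $\psi(t)=-\tfrac{4}{(n-1)^{2}}\bigl[(t^{n-1}-A)^{2}-(n-1)^{2}A\,t^{n-2}(t-1)^{2}\bigr]$ checks out (both sides equal $(\lambda-4)(t^{2n-2}+A^{2})+4At^{n-2}(t^{2}-\tfrac{\lambda}{2}t+1)$ upon expansion), and the subsequent elimination is accurate; each cancellation is justified, and the substitution $v=nt_{0}-n+2$ does reduce everything to $(v-2)^{2}=0$, giving $t_{0}=1$, a contradiction.

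Your route, however, is genuinely different from the paper's. The paper does \emph{not} spot an algebraic factorization of $\psi$; instead it introduces the auxiliary function $F(t)=\psi(e^{t})e^{(1-n)t}$, which turns out to be a linear combination of $e^{(n-1)t}+A^{2}e^{-(n-1)t}$ and $e^{t}+e^{-t}$. Writing $z_{0}=e^{w_{0}}$, the two conditions $F(w_{0})=F'(w_{0})=0$ give a pair of equations whose ``sum times difference'' produces a quadratic in $\cosh w_{0}$ that factors as $(\cosh w_{0}-1)^{2}=0$, hence $z_{0}+z_{0}^{-1}=2$ and again $z_{0}=1$. The two arguments therefore reach the same contradiction by different eliminations: yours is purely polynomial and elementary once the opening identity is seen, while the paper's exploits a hidden $t\leftrightarrow 1/t$--type symmetry via the exponential substitution. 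Your approach has the advantage of avoiding transcendental functions entirely; the paper's has the advantage that no preliminary identity must be discovered, since the symmetry of $F$ is manifest from its definition.
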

\begin{proof}
Let $F(t)=\psi(e^{t})e^{(1-n)t}$ for $t\in \mathbb{C}$. Then by elementary calculations we get
 $$F(t)=(\lambda-4)(e^{(n-1)t}+A^{2}e^{-(n-1)t})+4A(e^{t}+e^{-t})-2A\lambda$$
 Clearly if $t=0$ then $\psi(t)\neq0$.\\
Now, if possible $\psi(z_{0})=\psi'(z_{0})=0$. As $z_{0}\neq0$ , there exist some $w_{0} \in \mathbb{C}$ such that $z_{0}=e^{w_{0}}$.\\
As $F'(t)=\psi'(e^{t})e^{(1-n)t}e^{t}-(n-1)\psi(e^{t})e^{(1-n)t}$ , so $F(w_{0})=F'(w_{0})=0$.\\
Thus $$(\lambda-4)(e^{(n-1)w_{0}}+A^{2}e^{-(n-1)w_{0}})=-4A(e^{w_{0}}+e^{-w_{0}})+2A\lambda$$
and
$$(\lambda-4)(e^{(n-1)w_{0}}-A^{2}e^{-(n-1)w_{0}})=-\frac{4A(e^{w_{0}}-e^{-w_{0}})}{n-1}$$
Therefore
\beas 4A^{2}(\lambda-4)^{2} &=& (\lambda-4)^{2}((e^{(n-1)w_{0}}+A^{2}e^{-(n-1)w_{0}})^{2}-(e^{(n-1)w_{0}}-A^{2}e^{-(n-1)w_{0}})^{2})\\
&=& (-4A(e^{w_{0}}+e^{-w_{0}})+2A\lambda)^{2}-(-\frac{4A(e^{w_{0}}-e^{-w_{0}})}{n-1})^{2}\\
&=& 4A^{2}\lambda^{2}-32A^{2}\lambda\cosh w_{0}+64A^{2}\cosh^{2} w_{0}-\frac{64A^{2}}{(n-1)^{2}}\sinh^{2}w_{0}
\eeas
i.e. $$(\cosh w_{0})^{2}\{16-\frac{16}{(n-1)^{2}}\}-8\lambda \cosh w_{0}+\{8\lambda-16+\frac{16}{(n-1)^{2}}\}=0$$
i.e., $(\cosh w_{0}-1)^{2}=0$ that is $\cosh w_{0}=1$ which implies $z_{0}+\frac{1}{z_{0}}=2$\\
Hence $z_{0}=1$ but $\psi(1)=(1-A)^{2}\neq0$ as $A\neq 1$. Thus our assumption is wrong.
\end{proof}
\begin{lem}(\cite{2}) \label{s1.1} If $$\psi(t)=\lambda(t^{n-1}-1)^{2}-4(t^{n-2}-1)(t^{n}-1)$$ where $\lambda =4(1-\frac{1}{(n-1)^{2}})$ then $\psi(1)=0$ with multiplicity four. All other zeros of $\psi(t)$ are simple.
\end{lem}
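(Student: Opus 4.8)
For the statement below I take it that what is to be proved is Lemma~\ref{s1.1}: that $\psi(t)=\lambda(t^{n-1}-1)^{2}-4(t^{n-2}-1)(t^{n}-1)$ has $t=1$ as a zero of multiplicity exactly four and all other zeros simple.

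The plan is to reuse the argument in the proof of Lemma~\ref{s1} almost verbatim to see that $t=1$ is the only possible multiple zero of $\psi$, and then to read off its exact multiplicity from a short Taylor expansion at $t=1$.

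First I would rule out multiple zeros different from $1$. Since $\psi(0)=\lambda-4=-\frac{4}{(n-1)^{2}}\neq 0$, the origin is not a zero of $\psi$, so any multiple zero $z_{0}$ can be written $z_{0}=e^{w_{0}}$ for some $w_{0}\in\mathbb{C}$. Taking $F(t)=\psi(e^{t})e^{(1-n)t}$ exactly as in the proof of Lemma~\ref{s1}, now specialized to $A=1$, one gets $F(w_{0})=F'(w_{0})=0$; the elimination carried out there divides only by $A$ and by $\lambda$, never by $1-A$, so it transfers unchanged and again yields $\lambda(\cosh w_{0}-1)^{2}=0$. Hence $\cosh w_{0}=1$, that is $z_{0}=1$, so every zero of $\psi$ other than $t=1$ is simple.

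It remains to pin down the multiplicity at $t=1$. Clearly $\psi(1)=0$. Since $e^{(1-n)t}$ is a unit at $t=0$ and $e^{t}-1$ vanishes to first order there, the multiplicity of $\psi$ at $t=1$ equals the order of vanishing at $t=0$ of $F(t)=2(\lambda-4)\cosh((n-1)t)+8\cosh t-2\lambda$. Expanding both hyperbolic cosines and using the identity $(\lambda-4)(n-1)^{2}=-4$ (which is nothing but the definition of $\lambda$), the constant term and the $t^{2}$-coefficient of $F$ both vanish, while the $t^{4}$-coefficient is $\frac{1}{12}[(\lambda-4)(n-1)^{4}+4]=-\frac{n(n-2)}{3}\neq 0$ for $n\geq 3$. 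Hence $F$, and therefore $\psi$, has a zero of order exactly four at $t=0$, equivalently at $t=1$.

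The only delicate point is checking that the chain of identities borrowed from Lemma~\ref{s1} never secretly used $A\neq 1$ (it divides by $4A^{2}$, which is harmless at $A=1$); granting that, the remaining computation is routine, so I do not anticipate a genuine obstacle.
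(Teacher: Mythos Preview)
The paper does not supply its own proof of this lemma; it is simply quoted from Frank--Reinders \cite{2}. Your argument is correct and, pleasantly, self-contained within the paper: specializing the elimination in the proof of Lemma~\ref{s1} to $A=1$ is legitimate since that computation divides only by $4A^{2}$ and by $\lambda$, never by $A-1$, and it again forces $\cosh w_{0}=1$, i.e.\ $z_{0}=1$; hence every zero of $\psi$ other than $t=1$ is simple. Your Taylor expansion of $F(t)=2(\lambda-4)\cosh((n-1)t)+8\cosh t-2\lambda$ at $t=0$ is also correct: the identity $(\lambda-4)(n-1)^{2}=-4$ kills the constant and $t^{2}$ terms, and the $t^{4}$-coefficient $-n(n-2)/3$ is nonzero for $n\geq 3$, so the multiplicity at $t=1$ is exactly four (the passage from $F$ back to $\psi$ is clean because $t\mapsto e^{t}$ is a local biholomorphism at $0$ and $e^{(1-n)t}$ is a unit). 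There is no gap.
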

\begin{lem}\label{s2} If $$\psi(t)=\lambda(t^{n-1}-A)^{2}-4(t^{n-2}-A)(t^{n}-A)$$ where $\lambda =4(1-\frac{1}{(n-1)^{2}})$ and $A\neq1,0$ , $t \neq 1$ then $\psi(t)=0$ and $t^{n}-A=0$ has no common roots.
\end{lem}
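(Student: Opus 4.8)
The plan is to proceed by contradiction, exploiting the fact that the hypothesis $t^{n}-A=0$ annihilates one of the two terms in the definition of $\psi$. Suppose, contrary to the claim, that $t_{0}$ is a common root of $\psi(t)=0$ and $t^{n}-A=0$ with $t_{0}\neq 1$. The first thing I would record is that $t_{0}\neq 0$, since $t_{0}^{n}=A\neq 0$.

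Next I would substitute the relation $t_{0}^{n}=A$ directly into $\psi(t_{0})=\lambda(t_{0}^{n-1}-A)^{2}-4(t_{0}^{n-2}-A)(t_{0}^{n}-A)$. The second summand carries the factor $t_{0}^{n}-A$, which is zero, so the equation $\psi(t_{0})=0$ collapses to $\lambda(t_{0}^{n-1}-A)^{2}=0$. Since $\lambda=4(1-\frac{1}{(n-1)^{2}})\neq 0$ for the degrees $n$ under consideration, dividing through by $\lambda$ gives $t_{0}^{n-1}=A$.

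Combining $t_{0}^{n-1}=A$ with $t_{0}^{n}=A$ yields $t_{0}^{n}=t_{0}^{n-1}$, hence $t_{0}^{n-1}(t_{0}-1)=0$; as $t_{0}\neq 0$, this forces $t_{0}=1$, contradicting $t_{0}\neq 1$. Therefore $\psi(t)=0$ and $t^{n}-A=0$ have no common root with $t\neq 1$. The argument is essentially a one-line substitution, so there is no genuine obstacle here; the only points deserving explicit mention are that $\lambda\neq 0$ (so that it may be divided out) and that $t_{0}\neq 0$ (so that the factor $t_{0}^{n-1}$ may be cancelled), both of which follow immediately from $A\neq 0$ together with the admissible range of $n$.
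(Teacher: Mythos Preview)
Your proof is correct and follows essentially the same approach as the paper: substitute $t_{0}^{n}=A$ into $\psi(t_{0})=0$ to kill the second term, use $\lambda\neq 0$ to deduce $t_{0}^{n-1}=A$, and then combine the two relations to force $t_{0}=1$. The only cosmetic difference is that the paper concludes via $A=t_{0}^{n}=t_{0}\cdot t_{0}^{n-1}=t_{0}A$ and cancels $A\neq 0$, whereas you factor $t_{0}^{n-1}(t_{0}-1)=0$ and cancel $t_{0}^{n-1}\neq 0$.
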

\begin{proof}
If $\psi(t)=0$ and $t^{n}-A=0$ has a common root then by the expression of $\psi(t)$ we get $t^{n-1}-A=0$ and $t^{n}-A=0$.\\
So $A=t^{n}=tt^{n-1}=tA$, which is not possible as $A\neq0$ and $t\neq1$.
\end{proof}
\section{Proofs of the theorems}
\begin{proof} [\textbf{Proof of Theorem \ref{th1} }]
By the given conditions on $P$, we can write
 \begin{enumerate}
\item $P(z)-P(\alpha)= (z-\alpha)^{p}Q_{n-p}(z)$ where $Q_{n-p}(z)$ is a polynomial of degree $(n-p)$, $Q_{n-p}(\alpha)\not=0$ and
\item $P(z)-P(\beta)=(z-\beta)^{t}Q(z)$, where  $Q(z)$ is a polynomial of degree $(n-t)$ and $Q(\beta)\not=0$.
\end{enumerate}
As $\alpha,\beta$ are critical points of $P$, so $P(\alpha)\not=P(\beta)$ and $t,p\geq 2$. Hence $P(\alpha)P(\beta)\not=0$ as all zeros of $P$ are simple.\par
Now suppose for any two non-constant meromorphic functions $f$ and $g$ and a non-zero constant $A \in \mathbb{C}$, $$P(f) = AP(g).$$\\
Now we consider two cases :\\
\textbf{Case -1} $A \neq 1$.\\
From the assumption of the theorem, $P$ is satisfying $\max\{t,p\}+t+p\geq 5+n$ where $t$, $p$ are previously defined.\\
\textbf{Subcase -1.1}\par
First assume that $t\geq p$. Thus in this case we have $2t+p\geq 5+n$. We define $F=\frac{(f-\beta)^{t}Q(f)}{P(\beta)}$ and $G=\frac{(g-\beta)^{t}Q(g)}{P(\beta)}$. Thus
\bea\label{e1.1} F=AG+A-1.\eea
So by Mokhon'ko's Lemma (\cite{4.2}), we have $T(r,f)=T(r,g)+O(1)$.\\
\textbf{Subcase -1.1.1} $A \neq \frac{P(\alpha)}{P(\beta)}$.\\
Now by applying  the Second Fundamental Theorem we get,
\beas && 2nT(r,f)+O(1)=2T(r,F)\\
 &\leq& \overline{N}(r,F)+\overline{N}(r,0;F)+\overline{N}(r,\frac{P(\alpha)}{P(\beta)}-1;F)+\overline{N}(r,A-1;F)+S(r,F)\\
&\leq& \overline{N}(r,f)+\overline{N}(r,\beta;f)+(n-t) T(r,f)+\overline{N}(r,\alpha;f)+(n-p)T(r,f)+\\
&+&\overline{N}(r,0;G)+S(r,f)\\
&\leq& \overline{N}(r,f)+\overline{N}(r,\beta;f)+(n-t)T(r,f)+\overline{N}(r,\alpha;f)+(n-p)T(r,f)+\\
&+&\overline{N}(r,\beta;g)+(n-t) T(r,g)+S(r,f)\\
&\leq& (3n-2t-p+4)T(r,f)+S(r,f), \eeas
which is a contradiction as $2t+p\geq 5+n$.\\
\textbf{Subcase -1.1.2} $ A=\frac{P(\alpha)}{P(\beta)}$.\\
In this case $$P(\beta)F=P(\alpha)G+\{P(\alpha)-P(\beta)\}.$$\par
As $P(\alpha) \pm P(\beta) \not=0$ and $P(\alpha)P(\beta) \not=0$, so $\frac{P(\alpha)-P(\beta)}{P(\beta)}\not=-\frac{P(\alpha)-P(\beta)}{P(\alpha)}$.\\
Thus in view of the Second fundamental theorem we get that,
\beas && 2nT(r,g)+O(1)=2T(r,G)\\
&\leq& \overline{N}(r,G)+\overline{N}(r,0;G)+\overline{N}(r,-\frac{P(\alpha)-P(\beta)}{P(\alpha)};G)+\overline{N}(r,\frac{P(\alpha)-P(\beta)}{P(\beta)};G)+S(r,G)\\
&\leq& \overline{N}(r,g)+\overline{N}(r,\beta;g)+(n-t)T(r,g)+\overline{N}(r,0;F)+\overline{N}(r,\alpha;g)+(n-p)T(r,g)+\\
&+& S(r,g)\\
&\leq& (3+2n-t-p)T(r,g)+\overline{N}(r,\beta;f)+(n-t)T(r,f)+S(r,g)\\
&\leq& (3n-2t-p+4)T(r,g)+S(r,g), \eeas
which is a contradiction as $2t+p\geq 5+n$.\\
\textbf{Subcase -1.2}\\
Assume $t< p$. Thus in this case we have $t+2p\geq 5+n$.\par
We define $F=\frac{(f-\alpha)^{p}Q_{n-p}(f)}{P(\alpha)}$ and $G=\frac{(g-\alpha)^{p}Q_{n-p}(g)}{P(\alpha)}$. Next proceeding similarly to above we get contradiction if we interchange the place of $\alpha$ and $\beta$.\par
So we conclude that if a critically injective polynomial $P$ with no multiple zeros satisfy $\max\{t,p\}+t+p\geq 5+n$ then $P(f)=AP(g)$ always imply $A=1$.\\
\textbf{Case -2} $A=1$\\
Then as $P(z)$ is a UPM, we have $f\equiv g$.
\end{proof}
\begin{proof} [\textbf{Proof of Theorem \ref{th41}}]
By the given assumptions we may write
\begin{enumerate}
\item $P(z)-P(\gamma)= (z-\xi_{1})^{l_{1}}(z-\xi_{2})^{l_{2}}...(z-\xi_{p})^{l_{p}}$ with $\gamma=\xi_{1}$,
\item $P(z)-P(\delta)=(z-\eta_{1})^{m_{1}}(z-\eta_{2})^{m_{2}}...(z-\eta_{q})^{m_{q}}$ with $\delta=\eta_{1}$
\end{enumerate}
where $\xi_{i}\not=\xi_{j}$, $\xi_{i}\not=\eta_{j}$  and $\eta_{i}\not=\eta_{j}$ for all $i,j$.\par
As $P$ has no multiple zeros and $\gamma,\delta$ are critical points of $P$, so we have $P(\gamma)P(\delta) \not=0$. \\
Also $P(\gamma)\not=P(\delta)$ as $P$ is critically injective.\par
Suppose for any two non-constant meromorphic functions $f$ and $g$ and for any non-zero complex constant $A$, $$P(f) = AP(g).$$
Then by Mokhon'ko's Lemma (\cite{4.2}), \beas T(r,f)=T(r,g)+O(1)~and~S(r,f)=S(r,g).\eeas
Now we are considering two cases:\\
\textbf{Case-1} $A \neq 1$ and $A = P(\gamma)$.\\
Then $P(\gamma)\neq 1$ and
\be \label{e66e dana} P(f)-P(\gamma) = P(\gamma)(P(g)-1)\ee
\textbf{Subcase-1.1.} Suppose $P(\delta)\not=1$.\\
Let $\nu_k$ $(k=1,2,\ldots,l)$ be the distinct zeros of $(P(g)-1)$.\\
Then by the Second Fundamental Theorem, we get
\beas (l-1)T(r,g) &\leq& \overline{N}(r,\infty;g)+\sum\limits_{k=1}^{l}\overline{N}(r,\nu_{k};g)+S(r,g). \\
&\leq& T(r,g)+\sum\limits_{i=1}^{p}\overline{N}(r,\xi_{i};f)+S(r,g). \\
&\leq& (p+1)T(r,g)+S(r,g),\eeas
which is a contradiction.\\
\textbf{Subcase-1.2.} Next suppose $P(\delta)=1$.\\
Here we consider the following two cases.\\
\textbf{Subsubcase-1.2.1} Suppose $P(\gamma)=-1$. \\
Then $P(f)-P(\delta) = P(\gamma)(P(g)-P(\gamma))$.\\
In this case in view of the Second Fundamental Theorem we have
\beas (p-1)T(r,g) &\leq& \overline{N}(r,g)+\sum\limits_{i=1}^{p}\overline{N}(r,\xi_{i};g)+S(r,g) \\
&\leq& T(r,g)+\sum\limits_{j=1}^{q}\overline{N}(r,\eta_{j};f)+S(r,g) \\
&\leq& (q+1)T(r,g)+S(r,g),\eeas
which leads to a contradiction.\\
\textbf{Subsubcase-1.2.2} Let $P(\gamma)\not=-1$.\\
Thus $P(f)-P(\delta) = P(\gamma)(P(g)-\frac{P(\delta)}{P(\gamma)})$, where $\frac{P(\delta)}{P(\gamma)} \not\in \{1,P(\delta),P(\gamma)\}$.\\
Let $\theta_{l}$ $(l=1,2,...,t)$ be the distinct zeros of $(P(z)-\frac{P(\delta)}{P(\gamma)})$.\\
Then by the Second Fundamental Theorem, we get
\beas (t-2)T(r,g) &\leq& \sum\limits_{l=1}^{t}\overline{N}(r,\theta_{l};g)+S(r,g) \\
&\leq& \sum\limits_{j=1}^{q}\overline{N}(r,\eta_{j};f)+S(r,f) \\
&\leq& q T(r,g)+S(r,g),\eeas
which is a again contradiction.\\
\textbf{Case-2.} $A \neq 1$ and $A \neq P(\gamma)$.\\
In this case, $P(f)-AP(\delta) = A(P(g)-P(\delta))$.\\
\textbf{Subcase-2.1.} $AP(\delta)\neq P(\gamma)$.\\
Thus $AP(\delta)\not\in \{P(\gamma),P(\delta)\}$.\\
Let $\zeta_{k}$ $(k=1,2,\ldots,m)$ be the distinct zeros of $(P(z)-AP(\delta))$.\\
Then by the Second Fundamental Theorem we get
\beas (m-2)T(r,f) &<& \sum\limits_{k=1}^{m}\overline{N}(r,\zeta_{k};f)+S(r,f) \\
&\leq& \sum\limits_{j=1}^{q}\overline{N}(r,\eta_{j};g)+S(r,g) \\
&\leq& q T(r,g)+S(r,g),\eeas
which is not possible.\\
\textbf{Subcase-2.2.} $AP(\delta)= P(\gamma)$.\\
Thus $P(\delta) \not=1$ and $P(f)-P(\gamma) = A(P(g)-P(\delta))$.\\
By the Second Fundamental Theorem, we get
\beas (p-2)T(r,f) &<& \sum\limits_{i=1}^{p}\overline{N}(r,\xi_{i};f)+S(r,f) \\
&\leq& \sum\limits_{j=1}^{q}\overline{N}(r,\eta_{j};g)+S(r,g) \\
&\leq& q T(r,g)+S(r,g),\eeas
Proceeding similarly we get
$$(q-2)T(r,g)\leq p T(r,f)+S(r,f).$$
Since $|p-q|\geq 3$, in either cases we get a contradiction.\\
Thus $A=1$. Hence as $P$ is a UPM, we get $f\equiv g$.
\end{proof}
\begin{proof} [\textbf{Proof of Theorem \ref{th2} }]
Suppose  $f$ and $g$ be two non-constant meromorphic functions such that $P(g)=AP(f)$ where $A\in\mathbb{C}\setminus\{0\}$. Then by Mokhon'ko's Lemma (\cite{4.2}), \beas T(r,f)=T(r,g)+O(1)~and~S(r,f)=S(r,g).\eeas
Putting $h=\frac{f}{g}$, we get $$g^{2}(h^{n}-A)+ag(h^{n-1}-A)+b(h^{n-2}-A)=0.$$
If $h$ is a constant function then as $g$ is non-constant, we get $(h^{n}-A)=(h^{n-1}-A)=(h^{n-2}-A)=0$.\\
i.e., $A=Ah=Ah^{2}$ which gives $h=1$ and hence $f=g$.\par
Next we consider $h$ as non-constant.\\
Then \bea \label{decem} \left(g+\frac{a}{2}\frac{h^{n-1}-A}{h^{n}-A}\right)^{2}&=& \frac{b\psi(h)}{4(h^{n}-A)^{2}},\eea
where $\psi(t)=\lambda(t^{n-1}-A)^{2}-4(t^{n-2}-A)(t^{n}-A)$.\\
\textbf{Case-1.} $A=1$.\par
Clearly in view of Lemmas \ref {s1.1} and \ref {s2} from the equation (\ref{decem}) have $$\left(g+\frac{a}{2}\frac{h^{n-1}-1}{h^{n}-1}\right)^{2}=\frac{b(h-1)^{4}\prod\limits_{i=1}^{2n-6}(h-\kappa_{i})}{4\{(h-1)\prod\limits_{j=1}^{n-1}(h-\rho_{j})\}^{2}},$$
where $\kappa_{i}\not =\rho_{j}$ for $i=1,\ldots,2n-6;j=1,\ldots,n-1$.
\\
Now by the Second Fundamental Theorem, we get
\beas (3n-9)T(r,h) &\leq& \sum\limits_{i=1}^{2n-6}\ol{N}(r,\kappa_{i};h)+\sum\limits_{j=1}^{n-1}\ol{N}(r,\rho_{j};h)+S(r,h)\\
&\leq& \frac{1}{2}\sum\limits_{i=1}^{2n-6}N(r,\kappa_{i};h)+\sum\limits_{j=1}^{n-1}\ol{N}(r,\rho_{j};h)+S(r,h)\\
&\leq& (2n-4)T(r,h)+S(r,h) \eeas
which is a contradiction for $n\geq 6$.\\
\textbf{Case-2.} $A\neq1$.\\
From (\ref{decem}) we have,
 $$\left(g+\frac{a}{2}\frac{h^{n-1}-A}{h^{n}-A}\right)^{2}=\frac{b\psi(h)}{4(h^{n}-A)^{2}}.$$
By Lemma \ref{s1} $\psi(t)=0$ has $(2n-2)$ distinct zeros, say $\zeta_{i}$ for $i=1,\ldots,2n-2$. So in view of Lemma \ref{s2} and the Second Fundamental Theorem we get
\beas (2n-4)T(r,h) &\leq& \sum\limits_{i=1}^{2n-2}\ol{N}(r,\zeta_{i};h)+S(r,h)\\
&\leq& \frac{1}{2}\sum\limits_{i=1}^{2n-2}N(r,\zeta_{i};h)+S(r,h)\\
&\leq& (n-1)T(r,h)+S(r,h) \eeas
which is a contradiction when $n\geq 4$.\\
\end{proof}
\section{Applications}
We observe from the discussion that at the time of studying uniqueness polynomial, it is general curiosity of the researchers to investigate whether the zero set of the uniqueness polynomial forms unique range set or not. For example Yi (\cite{8}), Frank-Reiders (\cite{2}), the present first author (\cite{1}) simultaneously studied the corresponding unique range sets in connection to their uniqueness polynomial.
Though the motivation of this paper is to give some sufficient conditions for strong uniqueness polynomials and simultaneously reduce the degree of some existing strong uniqueness polynomials,
but as we have already introduced some new type of uniqueness polynomials in Example \ref{hii}, we also intend to follow the same direction. In other words below we demonstrate new type of unique range sets by taking the zero sets of the strong uniqueness polynomials in the Example \ref{hii}.\par
Before going to state our concerning result we recall some well known definitions and results.\\
Let $f$ and $g$ be two non-constant meromorphic functions and let $a$ be a finite complex number. We say that $f$ and $g$ share the value $a-$CM (counting multiplicities), provided that $f-a$ and $g-a$ have the same zeros with the same multiplicities. Similarly, we say that $f$ and $g$ share the value $a-$IM (ignoring multiplicities), provided that $f-a$ and $g-a$ have the same set of zeros, where the multiplicities are not taken into account.
In addition we say that $f$ and $g$ share $\infty$ CM (IM), if $1/f$ and $1/g$ share $0$ CM (IM).

Let $S$ be a set of distinct elements of $\mathbb{C}\cup\{\infty\}$ and $E_{f}(S)=\bigcup_{a\in S}\{z: f(z)=a\}$, where each zero is counted according to its multiplicity. If we do not count the multiplicity, then the set $ \bigcup_{a\in S}\{z: f(z)=a\}$ is denoted by $\ol E_{f}(S)$.
If $E_{f}(S)=E_{g}(S)$ we say that $f$ and $g$ share the set $S$ CM. On the other hand, if $\ol E_{f}(S)=\ol E_{g}(S)$, we say that $f$ and $g$ share the set $S$ IM.\par
Let a set $S\subset \mathbb{C} $ and $f$ and $g$ be two non-constant meromorphic (entire) functions. If $E_{f}(S)=E_{g}(S)$ implies $f\equiv g$ then $S$ is called a unique range set for meromorphic (entire) functions, in short URSM (URSE). \par
The analogous definition for reduced unique range sets are as follows :\\
 We shall call any set $S\subset \mathbb{C} $ a unique range set for meromorphic (entire) functions ignoring multiplicity (URSM-IM) (URSE-IM) or a reduced unique range set for meromorphic (entire) functions (RURSM) (RURSE) if $\ol E_{f}(S)=\ol E_{g}(S)$ implies $f\equiv g$ for any pair of non-constant meromorphic (entire) functions.\par
 Fujimoto first showed that the critical injection property of polynomials helps one to find sufficient condition for a set of zeros $S$ of a SUPM(SUPE) $P$ to be a URSM (URSE).
 \begin{theoE}\label{1111}(\cite{2.1})
 Let $P$ be a critically injective polynomial of degree $n$ in $\mathbb{C}$ having only simple zeros. Let $P'$ have $k$ distinct zeros and either $k\geq3$ or $k=2$ and $P'$ have no simple zero. Further suppose that $P$ is a SUPM(SUPE). If $S$ is the set of zeros of $P$, then $S$ is a URSM(URSE) whenever $n>2k+6(n>2k+2)$ while URSM-IM(URSE-IM) whenever $n>2k+12(n>2k+5)$.
 \end{theoE}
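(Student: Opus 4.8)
The plan is to treat the CM and IM statements in parallel, reducing each to the single claim that $P(f)\equiv A\,P(g)$ for some nonzero constant $A$; once that is established, the hypothesis that $P$ is a SUPM (SUPE) gives $f\equiv g$ at once. So suppose $f,g$ are non-constant meromorphic (entire) functions with $E_{f}(S)=E_{g}(S)$, respectively $\overline{E}_{f}(S)=\overline{E}_{g}(S)$. Because $P$ has only simple zeros, this says precisely that $F:=P(f)$ and $G:=P(g)$ share the value $0$ counting, respectively ignoring, multiplicities. By Mokhon'ko's Lemma $T(r,F)=n\,T(r,f)+O(1)$ and $T(r,G)=n\,T(r,g)+O(1)$, and a first use of the Second Fundamental Theorem for $f$ and for $g$, with target values the $n$ zeros of $P$ together with $\infty$ and the identity $\overline{N}(r,0;F)=\overline{N}(r,0;G)$, shows that $T(r,f)$ and $T(r,g)$ have the same order of growth, so that $S(r,f)=S(r,g)=:S(r)$ in what follows.

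Next I would set up an auxiliary-function dichotomy. In the CM case put $\psi:=\dfrac{F'}{F}-\dfrac{G'}{G}$; in the IM case put $\Phi:=\left(\dfrac{F''}{F'}-\dfrac{2F'}{F}\right)-\left(\dfrac{G''}{G'}-\dfrac{2G'}{G}\right)$. The lemma on the logarithmic derivative gives $m(r,\psi)=S(r)$ and $m(r,\Phi)=S(r)$, and a local expansion at a common zero of $F$ and $G$ shows that there the auxiliary function is holomorphic, and in the IM case even vanishes when the common zero is simple. Hence the poles of $\psi$ are simple and occur only at the poles of $f$ and of $g$, while the poles of $\Phi$ are simple and occur only at the poles of $f$ and of $g$, the zeros of $P'(f)$ and of $P'(g)$, the ramification points of $f$ and of $g$ not lying over a zero of $P$, and the finitely many common zeros of $F$ and $G$ of unequal multiplicity. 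If the auxiliary function vanishes identically, then in the CM case integration gives $F\equiv A\,G$ directly, and we are done, whereas in the IM case a second integration gives either $F\equiv A\,G$ or a genuine M\"obius relation $F=\dfrac{A\,G+B}{C\,G+D}$ with $C\neq 0$.

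This M\"obius alternative, which can occur only in the IM case, has to be excluded. In that situation $P(f)-\tfrac{A}{C}$ can vanish only over the poles of $g$, and at such a point its vanishing order is a multiple of $n$; comparing with the factorization of $P(z)-\tfrac{A}{C}$ forces $f$ to assume each point of $P^{-1}(\tfrac{A}{C})$ with large multiplicity. At the same time, critical injectivity of $P$ together with the hypothesis ``$k\geq 3$, or $k=2$ and $P'$ has no simple zero'' guarantees that $P(z)-w_{0}$ has at least three distinct roots for every $w_{0}\in\mathbb{C}$: a short factorization count shows that at most two distinct roots would force $k=1$, or two distinct critical points sharing a critical value, or $k=2$ with $P'$ having a simple zero, all of which are excluded. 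Feeding this heavy ramification of $f$ over $P^{-1}(\tfrac{A}{C})$, and the analogous fact for $g$, into the ramified form of the Second Fundamental Theorem contradicts the degree hypothesis; for entire $f,g$ the contradiction is immediate, since then $P(f)-\tfrac{A}{C}$ is nowhere vanishing and $f$ would omit every point of $P^{-1}(\tfrac{A}{C})$, against Picard. Thus this branch too yields $F\equiv A\,G$.

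It remains to handle the case where the auxiliary function is not identically zero, which is the heart of the argument. Here I would estimate $T(r,\psi)$, respectively $T(r,\Phi)$, from above by $S(r)$ plus the counting function of the poles identified in the second paragraph, and play it against the Second Fundamental Theorem applied to $f$ and to $g$ with respect to the $n$ zeros of $P$, the $k$ zeros of $P'$, and $\infty$, retaining the ramification terms and using $\overline{N}(r,0;F)=\overline{N}(r,0;G)$. Adding the two inequalities and inserting the bound on the auxiliary function should collapse everything to an estimate of the shape $c_{1}\,n\,T(r,f)\leq(c_{2}k+c_{3})\,T(r,f)+S(r)$ with absolute constants $c_{i}$, which fails exactly when $n>2k+6$ in the meromorphic CM case and when $n>2k+12$ in the meromorphic IM case, the entire cases gaining from the absence of pole terms on both sides, down to $n>2k+2$ and $n>2k+5$. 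The hard part is precisely this bookkeeping: one must keep exact track of how the multiple $e_{i}$-points and $d_{j}$-points of $f$ and $g$, the ramification, and the poles contribute to each side, so that the threshold constants come out as the stated $6$, $12$, $2$, $5$; the M\"obius step of the previous paragraph, with its multiplicity count and its use of the $k$-hypothesis, is the other delicate point.
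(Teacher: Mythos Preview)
There is nothing to compare against: Theorem~E is not proved in this paper. It is quoted verbatim from Fujimoto~\cite{2.1} as a known result and is used only as a black box in Section~5 (Applications), to deduce Theorems~\ref{bcha} and the following one from the strong-uniqueness-polynomial examples constructed earlier. The authors never attempt a proof, nor do they sketch one.

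That said, your outline is the standard Fujimoto strategy and is broadly on the right track: reduce to $P(f)\equiv A\,P(g)$ via an auxiliary function built from logarithmic derivatives (the function $\psi$ in the CM case, a Schwarzian-type combination in the IM case), dispose of the M\"obius alternative using the lemma that $P(z)-w_{0}$ always has at least three distinct roots under the hypothesis ``$k\geq 3$, or $k=2$ with $P'$ having no simple zero,'' and in the non-vanishing branch play the pole count of the auxiliary function against the Second Fundamental Theorem with the zeros of $P$ and $P'$ as targets. You correctly identify the two delicate points (the M\"obius exclusion and the final bookkeeping that pins down the thresholds $2k+6$, $2k+12$, $2k+2$, $2k+5$), but you do not actually carry out either computation; as written this is a plausible roadmap rather than a proof. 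If you want to complete it, the cleanest route is to follow Fujimoto's original paper~\cite{2.1} directly, where these counts are done in full.
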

Next we recall another definition.
\begin{defi}(\cite{1})
A set $S\subset \mathbb{C} $ is called a $URSM_{l)}$ ($URSE_{l)}$) if for any two non-constant meromorphic (entire) functions $f$ and $g$, $E_{l)}(S,f)=E_{l)}(S,g)$ implies $f\equiv g$ . \par
\end{defi}
In 2009 Bai, Han and Chen (\cite{1.0}) improved Theorem E as follows.
\begin{theoF}(\cite{1.0})
In addition to the hypothesis of Theorem E we suppose that $l$ is a positive integer or $\infty$. Let $S$ be the set of zeros of $P$. If
\begin{enumerate}
\item $l\geq 3$ or $\infty$ and  $n>2k+6(n>2k+2)$,
\item $l=2$  and  $n>2k+7(n>2k+2)$,
\item $l=1$  and  $n>2k+10(n>2k+4)$,
\end{enumerate}
then $S$ is a $URSM_{l)}$ ($URSE_{l)}$.
\end{theoF}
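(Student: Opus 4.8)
The plan is to run the argument that Fujimoto used for Theorem E, but to feed in the sharper zero-counting estimates that weighted sharing of order $l$ supplies in place of the plain CM/IM ones. Since the hypothesis of Theorem E already contains that $P$ is a SUPM (SUPE), it suffices to show that $E_{l)}(S,f)=E_{l)}(S,g)$ forces $P(f)\equiv AP(g)$ for some nonzero constant $A$, because then $f\equiv g$ is immediate. So let $f,g$ be non-constant meromorphic (entire) functions with $E_{l)}(S,f)=E_{l)}(S,g)$, and put $F=P(f)$, $G=P(g)$. As $P$ has only simple zeros, a point is a zero of $F$ of multiplicity $\mu$ exactly when $f$ takes there one of the $n$ zeros of $P$ with multiplicity $\mu$; hence $F$ and $G$ share the value $0$ with weight $l$ in Lahiri's sense. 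By Mokhon'ko's lemma (\cite{4.2}) we have $T(r,F)=nT(r,f)+O(1)$, $T(r,G)=nT(r,g)+O(1)$, $S(r,f)=S(r,g)$, and every counting function of $F$ or $G$ at $0$ or $\infty$ can be re-expressed through counting functions of $f$, $g$ at the $n$ zeros of $P$, at its $k$ critical points $d_1,\dots,d_k$, and at $\infty$; in particular $\overline N(r,\infty;F)\le \overline N(r,\infty;f)$.

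First I would introduce Lahiri's auxiliary function
$$H=\left(\frac{F''}{F'}-\frac{2F'}{F}\right)-\left(\frac{G''}{G'}-\frac{2G'}{G}\right),$$
for which $m(r,H)=S(r,f)$ and whose poles are confined to the poles of $F$ and $G$, the zeros of $F$ and $G$ of multiplicity $\ge l+1$, the common zeros of $F$ and $G$ with unequal multiplicities, and the zeros of $F'$, $G'$ that are not zeros of $F$, $G$. If $H\equiv0$, integration gives $\tfrac1F=\tfrac cG+d$ with $c\neq0$. When $d=0$ this reads $P(f)\equiv\tfrac1cP(g)$, so $f\equiv g$ and we are done. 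When $d\neq0$ we get $F=\tfrac{G}{c+dG}$, hence $F-\tfrac1d=\tfrac{-c}{d(c+dG)}$ has no zero at all, so $f$ omits every zero of $P(z)-\tfrac1d$. But $q_1+\dots+q_k=n-1$ and, by the hypothesis inherited from Theorem E, $q_j\le n-3$ for every $j$, so $P(z)-\tfrac1d$ has at least three distinct zeros (exactly $n-q_j$ of them if $\tfrac1d=P(d_j)$, and $n$ of them otherwise); since a non-constant meromorphic function cannot omit three distinct finite values, this is a contradiction. Thus the case $H\equiv0$ yields $f\equiv g$, and it only remains to rule out $H\not\equiv0$.

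The case $H\not\equiv0$ is the substance of the proof. Every zero of $F$ that is a simple zero of $G$ and avoids the exceptional sets above is a zero of $H$, so $N_{1)}(r,0;F)\le N(r,0;H)\le N(r,\infty;H)+S(r,f)$; estimating $N(r,\infty;H)$ by the list of possible poles of $H$ and using that, under weighted sharing, the ``multiplicity $\ge l+1$'' zeros of $F$ and $G$ each contribute at most $\tfrac1{l+1}$ of the associated $N(r,0;\cdot)$ (this being the one point at which the value of $l$ enters), one combines this with the second fundamental theorem for $F$ at $\{0,\infty\}$ and for $G$ at $\{0,\infty\}$, and then passes to $f$ and $g$ through $T(r,F)=nT(r,f)+O(1)$ and the identifications $\overline N(r,0;F)=\sum_i\overline N(r,a_i;f)$, $\overline N_0(r,0;F')$ being controlled by the $k$ critical points, and so on. The degree-$n$ growth of $T(r,F)$ then has to beat only a loss proportional to $k$, and one is led to an inequality of the shape
$$\bigl(n-2k-\varepsilon(l)\bigr)\bigl(T(r,f)+T(r,g)\bigr)\le S(r,f)+S(r,g),$$
where, in the meromorphic case, $\varepsilon(l)=6$ for $l\ge3$ or $l=\infty$, $\varepsilon(l)=7$ for $l=2$, $\varepsilon(l)=10$ for $l=1$, and, in the entire case (where the $\overline N(r,\infty;\cdot)$ terms drop out), $\varepsilon(l)=2$ for $l\ge2$ or $l=\infty$ and $\varepsilon(l)=4$ for $l=1$. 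Each of the hypotheses (1)--(3) makes the left-hand coefficient positive, which is the required contradiction.

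I expect the real difficulty to lie entirely in the $H\not\equiv0$ bookkeeping: one must be scrupulous about the difference between $N$ and $\overline N$, isolate precisely the zeros of $P'(f)$ and $P'(g)$ coming from the $k$ critical points of $P$ so that the competing loss is $2k$ rather than $2n$, cope with the fact that $f$ and $g$ are not assumed to share their poles (which is exactly why the meromorphic thresholds exceed the entire ones), and carry the fractional weight $\tfrac1{l+1}$ correctly through the second fundamental theorem — it is this fraction, absent when $l=\infty$ (and negligible for $l\ge3$) but as large as $\tfrac12$ when $l=1$, that produces the three different bounds $2k+6$, $2k+7$, $2k+10$. Everything else — Mokhon'ko's lemma, critical injectivity, and the second fundamental theorem — is used only in routine form.
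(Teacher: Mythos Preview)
Theorem~F is not proved in this paper at all: it is quoted as a known result of Bai, Han and Chen (\cite{1.0}) and is used only as a black box in Section~5 to derive Theorems~5.1 and~5.2. There is therefore no proof here against which to compare your proposal.

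That said, your outline is the standard Fujimoto--Lahiri scheme and is, in substance, what the original reference does: form $F=P(f)$, $G=P(g)$, introduce $H=(F''/F'-2F'/F)-(G''/G'-2G'/G)$, handle $H\equiv0$ by integrating to a M\"obius relation and invoking a Picard/second-fundamental-theorem obstruction, and in the main case $H\not\equiv0$ carry out the weighted-sharing bookkeeping to reach an inequality $(n-2k-\varepsilon(l))\bigl(T(r,f)+T(r,g)\bigr)\le S(r)$ with the values of $\varepsilon(l)$ you list. Your use of the hypothesis of Theorem~E to get $q_j\le n-3$ (hence at least three distinct zeros of $P-\tfrac1d$) is correct: if $k\ge3$ then $\sum q_j=n-1$ with each $q_j\ge1$ gives it, and if $k=2$ with no simple zero of $P'$ then $q_1,q_2\ge2$ and $q_1+q_2=n-1$ gives it as well.

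Two small points to tighten. First, after integrating $H\equiv0$ you should also treat the symmetric degeneration in which it is $G$, not $F$, that omits a value; the argument is identical but must be stated. Second, in the $H\not\equiv0$ estimate the contribution of the $k$ critical points enters through $\overline N_0(r,0;F')\le\sum_{j=1}^k\overline N(r,d_j;f)$ (and similarly for $g$), and one must be careful that the shared zeros of $F$ and $G$ of multiplicity $\le l$ are counted only once in the pole set of $H$; getting the constants $6,7,10$ (resp.\ $2,2,4$) exactly right is routine but does require writing the full inequality out rather than sketching it.
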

Recently the present first author proved the following result (\cite{1}) in more general settings.
\begin{theoG}(\cite{1})
In addition to the hypothesis of Theorem E we suppose that $l$ is a positive integer or $\infty$. Let $S$ be the set of zeros of $P$. If
\begin{enumerate}
\item $l\geq 3$ or $\infty$ and  $\min\{\Theta(\infty;f),\Theta(\infty;g)\}>\frac{6+2k-n}{4}$,
\item $l=2$  and  $\min\{\Theta(\infty;f),\Theta(\infty;g)\}>\frac{14+4k-2n}{9}$,
\item $l=1$  and  $\min\{\Theta(\infty;f),\Theta(\infty;g)\}>\frac{10+2k-n}{6}$,
\end{enumerate}
then $S$ is a $URSM_{l)}$ ($URSE_{l)}$.
\end{theoG}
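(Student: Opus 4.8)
The plan is to follow the scheme Fujimoto used for Theorem E, but to retain the counting functions $\overline{N}(r,\infty;f)$ and $\overline{N}(r,\infty;g)$ instead of bounding them by $T$, and to insert the standard weighted-sharing machinery so that the weight $l$ enters the final inequality quantitatively. First I would reduce matters: let $f,g$ be non-constant meromorphic (entire) functions with $E_{l)}(S,f)=E_{l)}(S,g)$, where $S=\{w_{1},\dots,w_{n}\}$ is the zero set of $P$, and set $F=P(f)$, $G=P(g)$. Since the $w_{i}$ are simple zeros of $P$, the zeros of $F$ (resp. $G$) are exactly the $w_{i}$-points of $f$ (resp. $g$) with unchanged multiplicities, so the hypothesis says precisely that $F$ and $G$ share $0$ with weight $l$. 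I would record: by Mokhon'ko's lemma $T(r,F)=nT(r,f)+O(1)$ and $T(r,G)=nT(r,g)+O(1)$; every pole of $F$ is a pole of $f$ with $n$ times the multiplicity, so $\overline{N}(r,\infty;F)=\overline{N}(r,\infty;f)$, $N(r,\infty;F)=nN(r,\infty;f)$, and each pole of $F$ has multiplicity $\geq n$; and a multiple zero of $F$ occurs only at a multiple $w_{i}$-point of $f$, the zeros of $F'$ lying off the zeros of $F$ being accounted for by the $k$ critical points $d_{1},\dots,d_{k}$ of $P$ (this is where $k$ enters).

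Next I would introduce the Wronskian-type function
$$H=\Bigl(\frac{F''}{F'}-\frac{2F'}{F}\Bigr)-\Bigl(\frac{G''}{G'}-\frac{2G'}{G}\Bigr)$$
and split into two cases. If $H\equiv0$, integrating twice produces constants $A\,(\neq0)$ and $B$ with $F=\dfrac{G}{A+BG}$. When $B=0$ this reads $P(f)=A^{-1}P(g)$, whence $f\equiv g$ because $P$ is a SUPM (SUPE), and we are done. When $B\neq0$, the poles of $F=P(f)$ coincide with the $(-A/B)$-points of $G=P(g)$, each of multiplicity $\geq n$, and symmetrically the poles of $G$ are the $(1/B)$-points of $F$; factoring $P(z)+A/B$ and $P(z)-1/B$ (treating separately the case in which $-A/B$ or $1/B$ is a critical value $P(d_{j})$) and applying the Second Fundamental Theorem to $f$ and to $g$ over the corresponding preimages together with $\infty$, while bounding $\overline{N}(r,\infty;f)$, $\overline{N}(r,\infty;g)$ via the $\Theta$-hypothesis, forces $n$ to be small; since the standing hypotheses ($k\geq3$, or $k=2$ with $P'$ having no simple zero) together with $P$ being a uniqueness polynomial (Theorem A) force $n\geq5$, the case $B\neq0$ is impossible. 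Hence $H\equiv0$ already gives $f\equiv g$.

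It remains to exclude $H\not\equiv0$. Here every common simple zero of $F$ and $G$ is a zero of $H$, so $\overline{N}_{1)}(r,0;F)\leq N(r,0;H)\leq N(r,\infty;H)+S(r,F)$, and $N(r,\infty;H)$ is bounded by $\overline{N}(r,\infty;f)+\overline{N}(r,\infty;g)$, the ramification terms $\overline{N}(r,0;F\mid\geq2)$ and $\overline{N}(r,0;G\mid\geq2)$ (controlled, through the $k$ critical points of $P$ and the counting underlying Theorem A, by $k\,T(r,f)$- and $k\,T(r,g)$-type expressions), and the weighted-sharing reduced counting function $\overline{N}_{*}(r,0;F,G)$, whose coefficient depends on whether $l\geq3$, $l=2$ or $l=1$. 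Inserting this into the Second Fundamental Theorem for $F$ over $0,\infty$ and symmetrically for $G$, adding the two inequalities, replacing $\overline{N}(r,\infty;f)$ by $(1-\Theta(\infty;f)+o(1))T(r,f)$ and likewise for $g$, and using $T(r,f)=T(r,g)+O(1)$, yields an inequality which, after substituting the hypothesis, becomes $\varepsilon\bigl(T(r,f)+T(r,g)\bigr)\leq S(r,f)$ with $\varepsilon>0$ --- the three cases corresponding exactly to the three stated bounds $\tfrac{6+2k-n}{4}$, $\tfrac{14+4k-2n}{9}$, $\tfrac{10+2k-n}{6}$ on $\min\{\Theta(\infty;f),\Theta(\infty;g)\}$ (and, specializing to $\Theta(\infty)=1$ for entire functions, reproducing the parenthetical $URSE_{l)}$ bounds of Theorem F). This contradiction shows $H\not\equiv0$ is impossible, so $P(f)=A^{-1}P(g)$ and $f\equiv g$; that is, $S$ is a $URSM_{l)}$ ($URSE_{l)}$).

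The main obstacle is the sharp bookkeeping in the case $H\not\equiv0$: one must (i) push the weighted-sharing error term through Lahiri-type inequalities so that the coefficient of $\overline{N}_{*}(r,0;F,G)$ comes out exactly right in each of the three regimes of $l$, and (ii) express the ramification of $F=P(f)$ and $G=P(g)$ at their zeros purely in terms of $k$, $T(r,f)$ and $T(r,g)$ --- using that the only multiple values of $P$ are its $k$ critical values --- since producing the constant $2k$, rather than something larger, is precisely what makes the thresholds sharp. A secondary delicate point is the clean exclusion of $B\neq0$ when $-A/B$ (or $1/B$) is a critical value of $P$, where the critical-injectivity and uniqueness-polynomial hypotheses must be used, not merely the Second Fundamental Theorem.
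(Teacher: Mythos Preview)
The paper does not prove Theorem~G: it is quoted verbatim from reference~\cite{1} as a known result, with no argument supplied here, so there is no in-paper proof to compare your proposal against.

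That said, your outline is precisely the strategy used in the cited source: set $F=P(f)$, $G=P(g)$, observe that $F,G$ share $0$ with weight $l$, form the auxiliary function $H$, dispose of $H\equiv0$ via integration and the SUPM hypothesis, and in the case $H\not\equiv0$ feed the weighted-sharing estimates into the Second Fundamental Theorem, retaining $\overline{N}(r,\infty;f)$ and $\overline{N}(r,\infty;g)$ so that the $\Theta$-deficiencies survive to the final inequality. Your identification of the two delicate points---the exact coefficient of $\overline{N}_{*}$ in the three regimes of $l$, and the control of the ramification of $P(f)$ via the $k$ critical values---is accurate.

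One caution: you invoke $T(r,f)=T(r,g)+O(1)$ in the $H\not\equiv0$ branch, but this equality is not available a priori from the sharing hypothesis alone; Mokhon'ko applies only once an algebraic relation $P(f)=cP(g)$ has been established. What one actually obtains first (from the Second Fundamental Theorem applied to $f$ over the $w_i$ and to $g$ over the $w_i$, using the shared-zero hypothesis) is a two-sided comparability $T(r,f)=O(T(r,g))$ and $T(r,g)=O(T(r,f))$, hence $S(r,f)=S(r,g)$; the final inequality is then arranged symmetrically in $T(r,f)+T(r,g)$ so that no finer comparison is needed. Adjust that step and the sketch matches the published proof.
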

We have already seen from Example \ref{hii} that the polynomial
\begin{equation}\label{equ1} P(z)=\sum\limits_{i=0}^{m} \sum\limits_{j=0}^{n} \binom{m}{i}\binom{n}{j}\frac{(-1)^{i+j}}{n+m+1-i-j}z^{n+m+1-i-j}a^{j}b^{i} + c,\end{equation}
is a critically injective strong uniqueness polynomial without any multiple zeros when $m+n\geq 5$, $\max\{m,n\}\geq 3$ and $\min\{m,n\}\geq 2$ with $a\not= b$, $b\not =0$.
Also we have defined
$$Q(z)=\sum\limits_{i=0}^{m} \sum\limits_{j=0}^{n} \binom{m}{i}\binom{n}{j}\frac{(-1)^{i+j}}{n+m+1-i-j}z^{n+m+1-i-j}a^{j}b^{i}$$
choose
$$c\not\in \{0,-Q(a),-Q(b),-\frac{Q(a)+Q(b)}{2}\}.$$
\\
So by the help of Theorem F and G, the following two theorems are obvious.
\begin{theo}\label{bcha} Let $m,n$ be two integers such that  $m+n\geq 5$, $\max\{m,n\}\geq 3$ and $\min\{m,n\}\geq 2$. Take $S=\{z~:~P(z)=0\}$ where $P$ is defined by \ref{equ1} with the already defined choice of $a,b,c$.\par
We suppose that $l$ is a positive integer or $\infty$. If
\begin{enumerate}
\item $l\geq 3$ or $\infty$ and  $m+n>9(5)$,
\item $l=2$  and  $m+n>10(5)$,
\item $l=1$  and  $m+n>13(7)$,
\end{enumerate}
then $S$ is a $URSM_{l)}$ ($URSE_{l)}$.
\end{theo}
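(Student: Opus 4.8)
The plan is to obtain Theorem \ref{bcha} as a direct application of Theorem F to the polynomial $P$ of Example \ref{hii}; consequently the whole argument reduces to checking that $P$ satisfies the hypotheses of Theorem E (hence of Theorem F) and then translating the numerical conditions. Throughout I keep the standing assumptions $m+n\geq 5$, $\max\{m,n\}\geq 3$, $\min\{m,n\}\geq 2$ and the prescribed choice of $a,b,c$, which are exactly those imposed in the statement.

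First I would invoke Example \ref{hii}: under these assumptions the polynomial $P$ given by $(\ref{equ1})$ is critically injective, has only simple zeros, and is a strong uniqueness polynomial for meromorphic functions; since every non-constant entire function is in particular meromorphic, $P$ is automatically a strong uniqueness polynomial for entire functions as well. Next I would examine $P'(z)=(z-b)^{m}(z-a)^{n}$. Because $a\not=b$, $P'$ has exactly $k=2$ distinct zeros, with multiplicities $m$ and $n$, and since $\min\{m,n\}\geq 2$ neither of them is simple. Hence $P$ falls into the case ``$k=2$ and $P'$ has no simple zero'' of Theorem E, with degree $\deg P=m+n+1$, and $S$ is precisely the set of its simple zeros, so Theorem F is applicable to $P$ and $S$.

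Finally I would substitute $k=2$ and $\deg P=m+n+1$ into the three inequalities of Theorem F. In the meromorphic case $\deg P>2k+6$, $\deg P>2k+7$, $\deg P>2k+10$ become $m+n>9$, $m+n>10$, $m+n>13$ for $l\geq 3$ (or $\infty$), $l=2$, $l=1$ respectively; in the entire case $\deg P>2k+2$, $\deg P>2k+2$, $\deg P>2k+4$ become $m+n>5$, $m+n>5$, $m+n>7$. These are exactly the conditions listed in the statement, so Theorem F yields that $S$ is a $URSM_{l)}$ ($URSE_{l)}$) in each of the three cases, completing the proof. No step here is genuinely hard; the only points requiring care are confirming that the $k=2$-with-no-simple-zero branch of Theorem E applies — which is precisely where $\min\{m,n\}\geq 2$ is used — and not confusing the degree parameter denoted ``$n$'' in Theorems E--F with the index $n$ used in Example \ref{hii}.
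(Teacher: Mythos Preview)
Your argument is correct and is exactly the route the paper takes: the authors simply remark that, by Example \ref{hii}, $P$ is a critically injective SUPM with only simple zeros, $P'(z)=(z-b)^{m}(z-a)^{n}$ has $k=2$ non-simple zeros, and then declare Theorem \ref{bcha} ``obvious'' by substituting $k=2$ and $\deg P=m+n+1$ into Theorem F. Your write-up in fact spells out more carefully than the paper does why the $k=2$ branch of Theorem E/F applies and why the numerical thresholds become those in the statement.
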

\begin{theo} With the suppositions of Theorem \ref{bcha}, if
\begin{enumerate}
\item $l\geq 3$ or $\infty$ and  $\min\{\Theta(\infty;f),\Theta(\infty;g)\}>\frac{9-m-n}{4}$,
\item $l=2$  and  $\min\{\Theta(\infty;f),\Theta(\infty;g)\}>\frac{20-2m-2n}{9}$,
\item $l=1$  and  $\min\{\Theta(\infty;f),\Theta(\infty;g)\}>\frac{13-m-n}{6}$,
\end{enumerate}
then $S$ is a $URSM_{l)}$ ($URSE_{l)}$.
\end{theo}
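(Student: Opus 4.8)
The plan is to obtain this theorem as an immediate consequence of Theorem G, in exactly the same way that Theorem \ref{bcha} follows from Theorem F. First I would recall from Example \ref{hii} that, under the standing hypotheses $m+n\geq 5$, $\max\{m,n\}\geq 3$, $\min\{m,n\}\geq 2$, together with $a\neq b$, $b\neq 0$ and $c\not\in\{0,-Q(a),-Q(b),-\frac{Q(a)+Q(b)}{2}\}$, the polynomial $P$ given by \eqref{equ1} is a critically injective strong uniqueness polynomial for meromorphic functions having only simple zeros, and that $P'(z)=(z-b)^{m}(z-a)^{n}$.

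Next I would verify that $P$ satisfies the structural hypotheses common to Theorems E, F and G. The derivative $P'$ has exactly $k=2$ distinct zeros, namely $a$ and $b$, and since $\min\{m,n\}\geq 2$ neither of them is a simple zero of $P'$; thus $P$ lies in the case ``$k=2$ with $P'$ having no simple zero'' covered by those theorems. Moreover the zero set $S=\{z:P(z)=0\}$ consists of exactly $\deg P=m+n+1$ distinct points. Hence Theorem G applies to $S$ with $k=2$ and with the degree parameter equal to $m+n+1$.

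It then remains only to translate the three bounds of Theorem G under the substitution $k=2$ and $(\deg P)=m+n+1$. A routine computation gives $\frac{6+2k-n}{4}=\frac{9-m-n}{4}$, $\frac{14+4k-2n}{9}=\frac{20-2m-2n}{9}$ and $\frac{10+2k-n}{6}=\frac{13-m-n}{6}$, which are precisely the three deficiency inequalities in the statement. Therefore Theorem G guarantees that $S$ is a $URSM_{l)}$ (respectively $URSE_{l)}$) in each of the three cases, which is exactly the assertion to be proved.

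The one point requiring care --- and really the only obstacle --- is purely notational: the symbol $n$ appearing in Theorems E, F and G denotes the degree of the underlying polynomial, which here equals $m+n+1$ rather than the $n$ occurring in the coefficients of $P$. Once this identification is made consistently, the inequalities above match verbatim, and no Nevanlinna-theoretic argument is needed beyond what is already packaged inside Theorem G.
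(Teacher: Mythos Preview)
Your proposal is correct and is precisely the paper's own approach: the authors state that both Theorem~\ref{bcha} and this theorem are ``obvious'' consequences of Theorems~F and~G, and your write-up simply makes explicit the substitution $k=2$, $\deg P=m+n+1$ (together with the verification from Example~\ref{hii} that the hypotheses of Theorem~G hold). The computations translating the three deficiency bounds are all correct.
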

\begin{cor}
consider the polynomial
$$P(z)=\sum\limits_{i=0}^{m} \binom{m}{i}\frac{(-1)^{i}}{n+m+1-i}z^{n+m+1-i}b^{i} + c,$$
where $bc \neq 0$ , $c\not=-b^{n+m+1}\lambda,-\frac{b^{n+m+1}\lambda}{2}$ where $\lambda=\sum\limits_{i=0}^{m} \binom{m}{i}\frac{(-1)^{i}}{n+m+1-i}$ and $m+n\geq 5$, $\max\{m,n\}\geq 3$, $\min\{m,n\}\geq 2$.\par
Clearly Lemma 2.2 of (\cite{1}) implies $P(b)-P(0)=b^{n+m+1}\lambda\not=0$, which implies $P$ is critically injective. Again as $P(0)=c\not=0$ and $P(b)\not=0$,
it follows that $P$ have no multiple zeros.\par
Finally, as $P(b)+P(0)=b^{n+m+1}\lambda+2c\not=0$ and  $m+n\geq 5$, $\max\{m,n\}\geq 3$ , $\min\{m,n\}\geq 2$, by Theorem \ref{th1}, $P$ is a strong uniqueness polynomial.
\par
Thus the last two theorems are applicable for this polynomial.
\end{cor}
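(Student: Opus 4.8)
The plan is to verify that $P$ meets all the hypotheses of Theorem \ref{th1}, which yields the strong uniqueness assertion, and then to recognise $P$ as the $a=0$ instance of the polynomial (\ref{equ1}) of Example \ref{hii}, so that Theorems \ref{bcha} and its $\Theta$-deficiency analogue apply with no further work. First I would differentiate term by term: exactly as in Example \ref{hii} with $a=0$, the double-index identity collapses to $P'(z)=z^{n}(z-b)^{m}$, so the critical points of $P$ are precisely $0$ and $b$, with multiplicities $n$ and $m$ respectively as zeros of $P'$; being the only critical points, they are automatically the two of maximal multiplicity that Theorem \ref{th1} requires. For critical injectivity I compute $P(0)=c$ and $P(b)-P(0)=b^{n+m+1}\lambda$, which is nonzero since $b\neq0$ and, by Lemma 2.2 of \cite{1}, $\lambda\neq0$. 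That $P$ has only simple zeros follows because neither critical point is a zero of $P$: $P(0)=c\neq0$ and $P(b)=b^{n+m+1}\lambda+c\neq0$, using the exclusions $c\neq0$ and $c\neq-b^{n+m+1}\lambda$.

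Next I would show $P$ is a uniqueness polynomial via Theorem A: here $k=2$ with critical multiplicities $\{q_{1},q_{2}\}=\{m,n\}$, and the hypotheses $\min\{m,n\}\geq2$, $m+n\geq5$ are exactly Fujimoto's condition $\min\{q_{1},q_{2}\}\geq2$, $q_{1}+q_{2}\geq5$. To apply Theorem \ref{th1} I put $\alpha=0$, $\beta=b$ and read the relevant orders off the factorizations $P(z)-P(0)=z^{n+1}R(z)$ and $P(z)-P(b)=(z-b)^{m+1}\widetilde{R}(z)$ (with $R(0)\neq0$, $\widetilde{R}(b)\neq0$) obtained exactly as in Example \ref{hii}; thus, in the notation of that theorem, $p=n+1$ and $t=m+1$. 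Since $\deg P=m+n+1$, the inequality $\max\{t,p\}+t+p\geq5+\deg P$ becomes $\max\{m+1,n+1\}+(m+1)+(n+1)\geq m+n+6$, which simplifies to $\max\{m,n\}\geq3$, an assumption; and $P(\alpha)+P(\beta)=P(0)+P(b)=b^{n+m+1}\lambda+2c\neq0$ by the hypothesis $c\neq-\frac{b^{n+m+1}\lambda}{2}$. Theorem \ref{th1} then gives that $P$ is a strong uniqueness polynomial.

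Finally, putting $a=0$ in (\ref{equ1}) gives $Q(0)=0$ and $Q(b)=b^{n+m+1}\lambda$, so the admissibility conditions $a\neq b$ and $c\notin\{0,-Q(a),-Q(b),-\frac{Q(a)+Q(b)}{2}\}$ of Example \ref{hii} reduce precisely to $b\neq0$ and $c\notin\{0,-b^{n+m+1}\lambda,-\frac{b^{n+m+1}\lambda}{2}\}$, the list in Example \ref{hii} carrying a repetition since $Q(0)=0$. Moreover $P'=z^{n}(z-b)^{m}$ has no simple zero because $\min\{m,n\}\geq2$, so the hypothesis of Theorem E that $k=2$ with $P'$ having no simple zero is satisfied; hence Theorems \ref{bcha} and its $\Theta$-deficiency analogue apply verbatim. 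I do not expect any genuine obstacle in any of this; the only points requiring care are the matching of $p=n+1$, $t=m+1$ with the variables of Theorem \ref{th1}, and checking that the three excluded values of $c$ are exactly those forcing $P(0)\neq0$, $P(b)\neq0$ and $P(0)+P(b)\neq0$ respectively.
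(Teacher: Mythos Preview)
Your proposal is correct and follows essentially the same route as the paper: verify critical injectivity via $P(b)-P(0)=b^{n+m+1}\lambda\neq0$, simplicity of the zeros via $P(0)\neq0$ and $P(b)\neq0$, the sum condition $P(0)+P(b)\neq0$, and then invoke Theorem~\ref{th1}; you merely make explicit the steps (Theorem~A for the uniqueness polynomial property, and the identification $p=n+1$, $t=m+1$) that the paper leaves implicit by cross-referencing Example~\ref{hii} and Remark~2.5. Your final identification of $P$ as the $a=0$ specialisation of (\ref{equ1}) to feed into Theorems~\ref{bcha} and its deficiency analogue is exactly what the paper does.
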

\section{Concluding Remarks}
We see that Theorem \ref{th2} assures the existence of a uniqueness polynomial with multiple zero which is a strong uniqueness polynomial. On the other hand, Example \ref{kly} exhibits a uniqueness polynomial with multiple zero which is not a strong uniqueness polynomial.
Also Example \ref{hii} shows that a uniqueness polynomial with simple zeros may be a strong uniqueness polynomial. Thus following question is inevitable.
\begin{ques}
Whether there exist any uniqueness polynomial of degree $\geq2$ with simple zeros which is not a strong uniqueness polynomial?
\end{ques}
\begin{center} {\bf Acknowledgement} \end{center}
 The first author's research work is supported by the Council Of  Scientific and Industrial Research, Extramural Research Division, CSIR Complex, Pusa, New Delhi-110012, India, under the sanction project no. 25(0229)/14/EMR-II. \par
  The second author's research work is supported by the Department of Science and Technology, Govt. of India under the sanction order DST/INSPIRE Fellowship/2014/IF140903.



\begin{thebibliography}{99}
\bibitem{1.0} X. Bai, Q. Han, and A. Chen, On a result of H. Fujimoto, J. Math. Kyoto Univ., 49(3),2009, 631-643.
\bibitem{1} A. Banerjee, A new class of strong uniqueness polynomials satisfying Fujimoto's condition, Annales Academi{\ae} Scientiarum Fennic{\ae} Mathematica, 40(2015), 465-474.
\bibitem{1.1} A. Banerjee and B. Chakraborty, A new type of unique range set with deficient values, Afrika Matematika, 26(7-8), 2015, 1561-1572.
\bibitem{1.2} A. Banerjee and I. Lahiri, A Uniqueness Polynomial Generating A Unique Range Set And Vise Versa, Comput. Method Funct. Theory, 12(2)(2012), 527-539.
\bibitem{2} G. Frank and M. Reinders, A unique range set for meromorphic functions with $11$ elements, Complex Var. and Ellip. Eqn., 37(1)(1998), 185-193.
\bibitem{2.1} H. Fujimoto, On uniqueness of meromorphic functions sharing finite sets, Amer. J. Math., 122(2000), 1175-1203.
\bibitem{2.2} H. Fujimoto, On uniqueness polynomials for meromorphic functions, Nagoya Math. J., 170 (2003), 33-46.
\bibitem{4} F. Gross, Factorization of meromorphic functions and some open problems, Proc. Conf. Univ. Kentucky, Leixngton, Kentucky(1976).
\bibitem{4.1} W. K. Hayman, Meromorphic Function, Clarendon Press, Oxford, 1964.
\bibitem{4.2} A. Z. Mokhon'ko, On the Nevanlinna characteristics of some meromorphic functions, in \enquote{Theory of func-
tions, functional analysis and their applications}, Izd-vo Khar'kovsk, Un-ta, 14(1971), 83-87.
\bibitem{5} P. Li and C. C. Yang, On the unique range sets of meromorphic functions, Proc. Amer. Math. Soc., 124(1996), 177-185.
\bibitem{6} P. Li and C. C. Yang, Some further results on the unique range sets of meromorphic functions, Kodai Math. J., 18(1995), 437-450.
\bibitem{6.1} C. C. Yang and X. H. Hua, Unique polynomials for entire and meromorphic functions, Mat. Fiz. Anal. Geom., 4(1997), 391-398.
\bibitem{8} H. X. Yi, Unicity theorems for meromorphic or entire functions III, Bull. Austral. Math. Soc. 53(1996), 71-82.
\end{thebibliography}
\end{document}